\newtheorem{proposition}{Proposition}[section]
\newtheorem{lemma}{Lemma}[section]
\newtheorem{corollary}{Corollary}[section]
\theoremstyle{definition}
\newtheorem{example}{Example}[section]
\newtheorem{conjecture}{Conjecture}[section]
\newtheorem{remark}{Remark}[section]
\newcommand{\Z}{\mathbb{Z}}
\begin{document}

\title{Twin primes and the 3-sphere}
\author{Samuel A. Hambleton}
\subjclass[2020]{Primary 11L17, Secondary 11P32}
\date{7 July, 2024}

\address{School of Science, Technology and Engineering, University of the Sunshine Coast, Maroochydore DC, Australia and \\
School of Mathematics and Physics, The University of Queensland, Australia}

\begin{abstract}
We investigate the group of points of the 3-sphere modulo a prime, point out connections to other known groups and the Chebyshev polynomials, and show that there is an infinite series which converges if and only if there are finitely many pairs of twin primes. Hence to prove that the series diverges is to prove the twin prime conjecture. 
\end{abstract}

\keywords{3-sphere, special unitary group, special orthogonal group, quaternionic multiplication, four square theorem, Wilson's theorem, primality criteria, twin prime conjecture}

\maketitle

\Large 

\section{Introduction}

There are a handful of statements known to give necessary and sufficient conditions for the positive integer $p$ to be prime. Perhaps the most well known is Wilson's theorem \cite[pp. 67]{Erdos}, the positive integer $p$ is prime if and only if $$(p-1)! \equiv -1 \pmod{p} .$$ Of theoretical value we form a similar characterization of primes and twin primes but with different computational limitations than calculating the factorial of an integer. The $3$-sphere 
\begin{equation}\label{3sphere}
\mathcal{S} : x_1^2 + x_2^2 + x_3^2 + x_4^2 = 1  
\end{equation}
is of great importance in relation to quaternions, the wedge product, Lie algebra, and Particle Physics \cite{Girard}. Lagrange showed that every positive integer can be expressed as the sum of the squares of four integers and Jacobi counted the number of representations of the positive integer $n$ as the sum of four integer squares. Schmutz \cite{Schmutz} showed that the unit $N$-sphere with center the origin has a dense set of rational points.  

In this article we consider groups of points satisfying \ref{3sphere} modulo $n$ and point out that this forms a non-abelian group. We show that $p$ is an odd prime if and only if the order of the group $\mathcal{S}(\mathbb{Z}/p) $ is equal to $p^3 - p$ in Corollary \ref{maincor} and give a criterion for $p$ and $p+2$ to be twin primes in Corollary \ref{twins}. We exhibit an infinite series which converges if and only if the twin prime conjecture is false in Proposition \ref{propseries}. In the final section we conclude with some observations on relationships between $\mathcal{S}(\mathbb{Z}/p) $ are other finite groups.   

Euler knew that if two positive integers can be written as the sum of four squares, then their product can also be written as the sum of four squares \cite[pp. 210]{Erdos}. In fact, Equation \eqref{grouplaw} below coincides with Euler's four square identity \cite[pp. 193]{Euler}. This suggests that the $3$-sphere has an algebraic structure, and the structure is essentially known as quaternionic multiplication. We begin with a proposition contextualizing the group structure of the $3$-sphere over the ring commutative ring with unity $\mathcal{R}$. However, the simplest way to understand and work with the group $\mathcal{S}(\mathcal{R})$ is to do so via the image of an injective map from $\mathcal{S}(\mathcal{R})$ to the special orthogonal group of order $4$. In other words compute $X \oplus Y$ by multiplying the matrices $\phi (X) \phi (Y)$ defined in \ref{firstprop} below. This works whether or not $\sqrt{-1}$ is an element of $\mathcal{R}$. 

\begin{proposition}\label{firstprop}
Let 
\begin{align*}
X & = \left(
\begin{array}{cccc}
 x_1 & x_2 & x_3 & x_4 \\
\end{array}
\right) , & Y & = \left(
\begin{array}{cccc}
 y_1 & y_2 & y_3 & y_4 \\
\end{array}
\right) 
\end{align*}
be points of the $3$-sphere $\mathcal{S}$ with coordinates in the commutative ring with unity $\mathcal{R}$ such that the imaginary number $i \not\in \mathcal{R}$. Define the maps 
\begin{align*}
\theta & : \mathcal{S}( \mathcal{R} ) \longrightarrow \text{SU}_2 \left( \mathcal{R} \right) , & \theta (X) & = \left(
\begin{array}{cc}
 x_1 + x_2 i & - x_3 + x_4 i \\
 x_3 + x_4 i & x_1 - x_2 i \\
\end{array}
\right) , \\
\phi & : \mathcal{S}( \mathcal{R} ) \longrightarrow \text{SO}_4( \mathcal{R} ) , & \phi ( X ) & = \left(
\begin{array}{cccc}
 x_1 & x_2 & x_3 & x_4 \\
 -x_2 & x_1 & x_4 & -x_3 \\
 -x_3 & -x_4 & x_1 & x_2 \\
 -x_4 & x_3 & -x_2 & x_1 \\
\end{array}
\right) , \\
\end{align*}
where $\text{SU}_2 \left( \mathcal{R} \right) $ denotes the special unitary group of degree $2$ with entries in the ring $\mathcal{R}[i]$ and $\text{SO}_4( \mathcal{R} )$ is the special orthogonal group of order $4$ with entries in $\mathcal{R}$. Then: 
\begin{enumerate}
\item $\theta$ is a bijection. Hence $\langle \mathcal{S}(\mathcal{R}) , \oplus \rangle $ is a non-abelian group, where 
\begin{align*}
\oplus & : \mathcal{S}(\mathcal{R}) \times \mathcal{S}(\mathcal{R}) \longrightarrow \mathcal{S}(\mathcal{R}) , & X \oplus Y & = \theta^{-1} \left( \theta (X) \theta (Y) \right) .
\end{align*} 
\item $\phi$ is an injective group homomorphism. 
\item If $\mathcal{R} = \mathbb{Z}/n$, where $n$ is a positive integer, then there is a short exact sequence 
\begin{equation*}
1  \longrightarrow \mathcal{S}\left(\mathbb{Z}/n\right) \stackrel{f}{\longrightarrow} U_2 \left( \mathbb{Z}/n \right) \stackrel{g}{\longrightarrow} \left(\mathbb{Z}/n\right)^{\times } \longrightarrow 1  
\end{equation*}
of group homomorphisms, where $U_2 \left( \mathbb{Z}/n \right)$ is the unitary group.
\end{enumerate}
\end{proposition}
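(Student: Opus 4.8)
The plan is to route all three parts through the Hamilton quaternions $\mathcal{H}(\mathcal{R}) = \mathcal{R} \oplus \mathcal{R}\bv{i} \oplus \mathcal{R}\bv{j} \oplus \mathcal{R}\bv{k}$, identifying a point $X = (x_1,x_2,x_3,x_4)$ with the quaternion $q_X = x_1 + x_2\bv{i} + x_3\bv{j} + x_4\bv{k}$ of reduced norm $N(q_X) = x_1^2 + x_2^2 + x_3^2 + x_4^2$, so that $\mathcal{S}(\mathcal{R})$ becomes the set of norm-one quaternions. For part (1) I would first check that $\theta$ is well defined: expanding the determinant gives $\det\theta(X) = (x_1 + x_2 i)(x_1 - x_2 i) - (-x_3 + x_4 i)(x_3 + x_4 i) = x_1^2 + x_2^2 + x_3^2 + x_4^2 = 1$, and $\theta(X)$ visibly has the unitary shape $\begin{pmatrix} a & b \\ -\bar b & \bar a \end{pmatrix}$ with $a\bar a + b\bar b = 1$. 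Conversely every element of $\text{SU}_2(\mathcal{R})$ has this shape, and because $i \notin \mathcal{R}$ the ring $\mathcal{R}[i]$ is free of rank two over $\mathcal{R}$, so $a = x_1 + x_2 i$ and $b = -x_3 + x_4 i$ recover $(x_1,x_2,x_3,x_4)$ uniquely; this gives both surjectivity and injectivity of $\theta$. Since $\text{SU}_2(\mathcal{R})$ is closed under multiplication and inversion, transporting its law through $\theta^{-1}$ makes $\oplus$ a group law for which $\theta$ is an isomorphism by construction. For non-abelianness I would take $X, Y$ corresponding to $\bv{i}, \bv{j}$ and compute that $\theta(X)\theta(Y)$ and $\theta(Y)\theta(X)$ differ by a factor vanishing exactly when $2 = 0$ in $\mathcal{R}$, so the group is non-abelian whenever $2 \in \mathcal{R}^{\times}$, in particular for $\mathcal{R} = \mathbb{Z}/n$ with $n$ odd.

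For part (2) the conceptual content is that $\oplus$ is quaternion multiplication: multiplying $\theta(X)\theta(Y)$ and reading off coordinates in the basis $\{1, i\}$ reproduces Euler's four-square identity, that is, the coordinates of $q_X q_Y$ given by \eqref{grouplaw}. I would then recognise $\phi(X)$ as the matrix, in the ordered basis $1, \bv{i}, \bv{j}, \bv{k}$, of one-sided quaternion multiplication by $q_X$: its rows are precisely the coordinate vectors of the four products of $q_X$ with the basis elements. Since assigning to each quaternion its one-sided multiplication operator is multiplicative, $\phi$ carries $\oplus$ to matrix multiplication, so the homomorphism property $\phi(X\oplus Y) = \phi(X)\phi(Y)$ is exactly associativity in $\mathcal{H}(\mathcal{R})$. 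Membership in $\text{SO}_4(\mathcal{R})$ then follows from $\phi(X)\phi(X)^{T} = N(q_X) I_4 = I_4$ and $\det\phi(X) = N(q_X)^2 = 1$, and injectivity is immediate since the first row of $\phi(X)$ is $X$ itself.

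For part (3) I would realise $U_2(\mathbb{Z}/n)$ as the group of invertible quaternions, namely the image under $\theta$ (with the norm-one condition dropped) of $\{q \in \mathcal{H}(\mathbb{Z}/n) : N(q) \in (\mathbb{Z}/n)^{\times}\}$, i.e. the matrices $\begin{pmatrix} a & b \\ -\bar b & \bar a \end{pmatrix}$ whose determinant $a\bar a + b\bar b$ is a unit; this is a group because the norm is multiplicative. Take $f$ to be the inclusion $\mathcal{S}(\mathbb{Z}/n) = \{N = 1\} \hookrightarrow U_2(\mathbb{Z}/n)$ and $g$ to be the determinant, equivalently the norm $g(q) = x_1^2 + x_2^2 + x_3^2 + x_4^2$. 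Multiplicativity of the norm makes $g$ a homomorphism; its kernel is by definition the norm-one quaternions $\mathcal{S}(\mathbb{Z}/n)$, giving exactness at $U_2(\mathbb{Z}/n)$, while injectivity of $f$ gives exactness at $\mathcal{S}(\mathbb{Z}/n)$. The one step carrying real content is surjectivity of $g$, that every unit of $\mathbb{Z}/n$ is a sum of four squares: for an odd prime $p$ this is the classical pigeonhole argument that the $(p+1)/2$ values $a^2$ and the $(p+1)/2$ values $u - b^2$ must coincide for some $a, b$, so every residue is already a sum of two squares; the general modulus follows by the Chinese Remainder Theorem together with Hensel lifting to prime-power moduli.

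I expect the main obstacle to be the convention-tracking in part (2): the representation satisfies $\theta(\bv{i})\theta(\bv{j}) = -\theta(\bv{k})$, so $\oplus$ is the reversed (opposite-algebra) product, and one must pin down consistently whether $\phi$ encodes left or right multiplication, and in which order $q_X q_Y$ is formed, so that the identity holds as $\phi(X)\phi(Y)$ rather than $\phi(Y)\phi(X)$. This is bookkeeping rather than a deep difficulty; the only other place needing genuine work is the surjectivity of the norm in part (3), which is standard.
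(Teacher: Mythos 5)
Your proposal is correct in substance and follows the same skeleton as the paper's proof (transport of structure through $\theta$; the homomorphism property of $\phi$; the determinant-based exact sequence), but the quaternionic framing is a genuine improvement in two places. Where the paper disposes of part (2) with ``it is easy to verify that $\phi(X)\phi(Y) = \phi(Z)$,'' you derive it conceptually: $\phi(X)$ is the matrix of one-sided multiplication by $q_X$ in the basis $1, \mathbf{i}, \mathbf{j}, \mathbf{k}$ (its rows are exactly the coordinates of $q_X \mathbf{e}$ for basis elements $\mathbf{e}$), so the homomorphism property is associativity in the quaternion algebra. Your convention warning is also accurate and worth keeping: the paper's explicit law \eqref{grouplaw} carries a \emph{minus} sign on the cross product, i.e.\ $X \oplus Y$ corresponds to $q_Y q_X$ in the standard convention $\mathbf{i}\mathbf{j} = \mathbf{k}$, which is consistent with $\theta(\mathbf{i})\theta(\mathbf{j}) = -\theta(\mathbf{k})$ and with $\phi$ being left multiplication read on row vectors; once this is fixed, $\phi(X)\phi(Y) = \phi(X \oplus Y)$ does hold in that order. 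You also actually prove non-abelianness, which the paper only asserts; note that your computation $\mathbf{i}\mathbf{j} = \mathbf{k} \neq -\mathbf{k} = \mathbf{j}\mathbf{i}$ gives it whenever $2 \neq 0$ in $\mathcal{R}$, not merely when $2 \in \mathcal{R}^{\times}$, and $2 \neq 0$ is automatic under the hypothesis $i \notin \mathcal{R}$ (if $2 = 0$ then $1^2 = -1$).

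The one step that would fail as written is the surjectivity of $g$ when $n$ is even. The paper does this in one line: lift the unit $u \in (\mathbb{Z}/n)^{\times}$ to a positive integer, write it as a sum of four integer squares by Lagrange, and reduce modulo $n$. Your route --- two squares modulo $p$ by pigeonhole, then CRT, then ``Hensel lifting to prime-power moduli'' --- is fine for odd $p$ (in any representation of a unit some coordinate $x_i$ is a unit, so the derivative $2x_i$ is a unit and Hensel applies), but it breaks at $p = 2$: there every derivative $2x_i$ is a zero divisor, and the naive Hensel lemma does not lift solutions of $x_1^2 + \cdots + x_4^2 \equiv u \pmod{2}$ to higher powers of $2$. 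The case is easily repaired --- for odd $u$, one of $u, u-2, u-4, u-6$ is $\equiv 1 \pmod 8$ and hence a square modulo $2^e$, so $u$ is a sum of at most four squares ($u = x^2$, $x^2 + 1 + 1$, $x^2 + 4$, or $x^2 + 1 + 1 + 4$) --- but as stated your argument only covers odd $n$. Either patch this case explicitly or simply quote Lagrange's theorem as the paper does.
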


\begin{proof}
Let $x_1,x_2,x_3,x_4 \in \mathcal{R}$ satisfy $x_1^2 + x_2^2 + x_3^2 + x_4^2 = 1$, let \\
$X = \left(
\begin{array}{cccc}
 x_1 & x_2 & x_3 & x_4 \\
\end{array}
\right)$, and let $\alpha = x_1 + x_2 i, \beta = x_3 + x_4 i \in \mathcal{R}[i]$. Then $\theta (X) = \left(
\begin{array}{cc}
 \alpha & - \overline{\beta } \\
 \beta & \overline{\alpha } \\
\end{array}
\right) \in \text{SU}_2 \left( \mathcal{R} \right)$ so $\theta $ is well defined. To show that $\theta $ is injective, assume that $\theta (X) = \theta (Y)$. Then $x_1 + x_2 i = y_1 + y_2 i$ and $x_3 + x_4 i = y_3 + y_4 i$ so that $X = Y$ since $i \not\in \mathcal{R}$. To show that $\theta $ is surjective, let $A = \left(
\begin{array}{cc}
 \alpha & - \overline{\beta } \\
 \beta & \overline{\alpha } \\
\end{array}
\right) \in \text{SU}_2 \left( \mathcal{R} \right)$ with $\alpha , \beta \in \mathcal{R}[i]$. There exists $x_1, x_2, x_3, x_4 \in \mathcal{R}$ such that $\alpha = x_1 + x_2 i$, $\beta = x_3 + x_4 i$ and since $\alpha \overline{\alpha } + \beta \overline{\beta } = 1$, we have $x_1^2 + x_2^2 + x_3^2 + x_4^2 = 1$. Hence $\theta $ is a bijection, $\theta^{-1}$ exists, and by transport of the structure of the group $ \text{SU}_2 \left( \mathcal{R} \right) $, $\langle \mathcal{S}(\mathcal{R}) , \oplus \rangle $ is a non-abelian group, where 
\begin{align*}
\oplus & : \mathcal{S}(\mathcal{R}) \times \mathcal{S}(\mathcal{R}) \longrightarrow \mathcal{S}(\mathcal{R}) , & X \oplus Y & = \theta^{-1} \left( \theta (X) \theta (Y) \right) .
\end{align*}
We have 
\begin{equation}\label{grouplaw}
X \oplus Y = \left( x_1 y_1 - \utilde{X} \cdot \utilde{Y} \ \mid \ \utilde{X} y_1 + x_1 \utilde{Y} - \utilde{X} \times \utilde{Y} \right) , 
\end{equation}
where $\utilde{X}$ is obtained by omitting the first coordinate of $X$, $\cdot $ is the scalar product and $\times $ is the cross product.

The map $\phi $ is well defined since if $X \in \mathcal{S}(\mathcal{R})$, then $\phi (X)$ is orthogonal with $$\phi (X) \phi (X)^T = \left( x_1^2 + x_2^2 + x_3^2 + x_4^2 \right) I = I = \phi (X)^T \phi (X) .$$  
It is easy to verify that $\phi (X) \phi (Y) = \phi (Z)$, where $Z = X \oplus Y$ given by \eqref{grouplaw}. It follows that $\phi $ is a group homomorphism. The kernel of $\phi $ is $\left\{ O \right\}$, where $O =  \left(
\begin{array}{cccc}
 1 & 0 & 0 & 0 \\
\end{array}
\right)$ is the identity of $\mathcal{S}(\mathcal{R})$ so $\phi $ is injective. 

Consider the unitary group $U_2 \left( \left(\mathbb{Z}/n \right) \right)$,
\begin{equation*}
U_2 \left( \left(\mathbb{Z}/n \right) \right) = \left\{ \text{ invertible 2 by 2 } \left(
\begin{array}{cc}
 \alpha & - \overline{\beta } \\
 \beta & \overline{\alpha } \\
\end{array}
\right) \ : \ \alpha , \beta \in \left(\mathbb{Z}/n \right) \right\} . 
\end{equation*}
We show that there is a short exact sequence
\begin{equation*}
1  \longrightarrow \mathcal{S}\left(\mathbb{Z}/n\right) \stackrel{f}{\longrightarrow} U_2 \left( \mathbb{Z}/n \right) \stackrel{g}{\longrightarrow} \left(\mathbb{Z}/n\right)^{\times } \longrightarrow 1  
\end{equation*}
of group homomorphisms, where $$f \left(
\begin{array}{cccc}
 x_1 & x_2 & x_3 & x_4 \\
\end{array}
\right) = \left(
\begin{array}{cc}
 x_1 + x_2 i & - x_3 + x_4 i \\
 x_3 + x_4 i & x_1 - x_2 i \\
\end{array}
\right) ,$$ and $g \left( \left(
\begin{array}{cc}
 \alpha & - \overline{\beta } \\
 \beta & \overline{\alpha } \\
\end{array}
\right) \right) = \alpha \overline{\alpha } + \beta \overline{\beta }$, the determinant of the matrix modulo $n$. This can be viewed as similar to the well known short exact sequence of Lie groups $$ 1 \longrightarrow SU(N) \longrightarrow U(N) \longrightarrow U(1) \longrightarrow 1,$$ where the $N \times N$ matrices are $2 \times 2$ in this case. The group operation was defined such that $f$ is a homomorphism and $g$ is a homomorphism since determinants are multiplicative. $g$ is surjective by Lagrange's four square theorem. The kernel of $g$ is the set of all matrices in $U_2 \left( \left(\mathbb{Z}/n \right) \right)$ of determinant $1$. This coincides with the image of the map $f$.
\end{proof}

It is easy to use matrices in $\phi [ \mathcal{S}(\mathcal{R})]$ to calculate in the group $\mathcal{S}(\mathcal{R})$. The transpose of the matrix $\phi ( X )$ is the same matrix discovered by Elfrinkhof \cite{Elfrinkhof} in association with the factorization of a rotation matrix. A number of useful identities on quaternionic multiplication are relevant to these groups can be found in \cite[Chp. 8]{Gallier}. Regardless of whether $i \in \mathcal{R}$, Equation \eqref{grouplaw} defines the group law $\oplus$. 

Equation \eqref{grouplaw} hints at the existence of some geometric interpretation of the group law $\oplus$ for $\mathcal{S}$ similar to the construction of the group law for elliptic curves or Pell conics \cite{Lemm}. See Figure \ref{uncirc} and Remark \ref{conics}.

\begin{figure}[h]
\centering
  \includegraphics[width={8.0cm}]{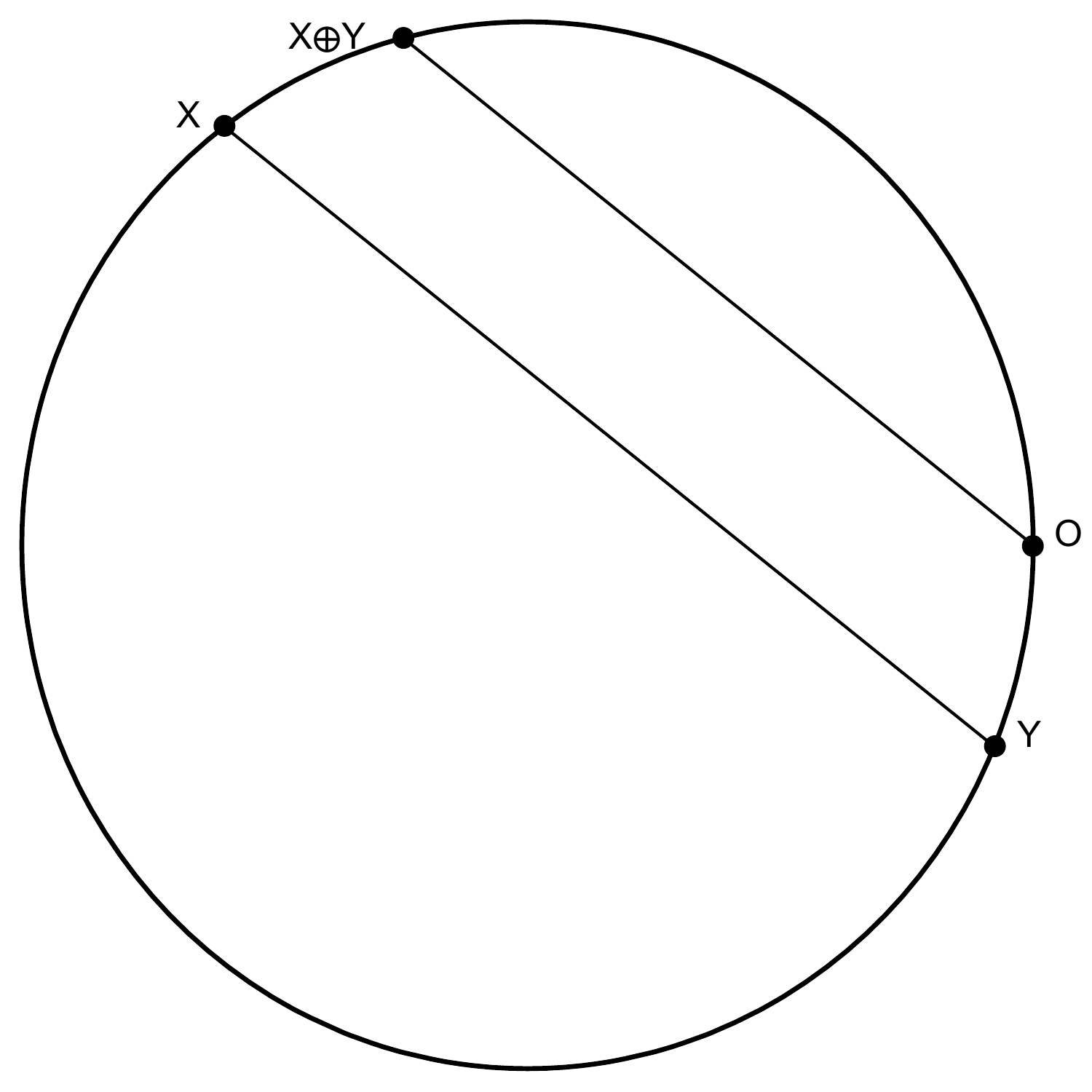}
  \caption{The group law on the unit circle.}
  \label{uncirc}
\end{figure} 

It is interesting to note that: 
\begin{remark}
Like the circle, the $3$-sphere without the rational sphere $\mathcal{S}_2 \ : \ x_1^2 + x_2^2 + x_4^2 = 1; \ x_3 = 0$ is rational. There is a birational map 
\begin{align*}
\mathbb{Q}^{\ast} \times \mathbb{Q}^2 & \longrightarrow \mathcal{S}(\mathbb{Q}) - \mathcal{S}_2 (\mathbb{Q}), 
& (t, u, v) & \longmapsto \left(
\begin{array}{cccc}
 \frac{w - 2 t^2}{w} & \frac{2 t u}{w} & \frac{2 t}{w} & \frac{2 t v}{w} \\
\end{array}
\right) ,
\end{align*}
where $w = 1 + t^2 + u^2 + v^2$ and $\mathbb{Q}^{\ast} = \mathbb{Q} - \{ 0 \}$ so $t \not= 0$. \\
If $\left(
\begin{array}{cccc}
 x_1 & x_2 & x_3 & x_4 \\
\end{array}
\right) \in \mathcal{S}(\mathbb{Q}) - \mathcal{S}_2 (\mathbb{Q})$, then 
\begin{align*}
t & = \frac{1-x_1}{x_3} , & u & = \frac{x_2}{x_3} , & v & = \frac{x_4}{x_3} , & w & = \frac{2 \left( 1 - x_1 \right) }{x_3^2} .
\end{align*}
If $\left(
\begin{array}{ccc}
 t & u & v \\
\end{array}
\right) \in \mathcal{S}_2 (\mathbb{Q})$, then $\left(
\begin{array}{cccc}
 1 - t^2 & t u & t & t v \\
\end{array}
\right) \in \mathcal{S}_2 (\mathbb{Q})$. 
\end{remark}

\section{Primes and the group modulo $n$}

Corollary \ref{maincor} below gives necessary and sufficient conditions for primality via the $3$-sphere. To prove this, we must first count the number of points of the $3$-sphere modulo $n$. Throughout this section we will let \\
$R_4(n) = \# \mathcal{S} (\mathbb{Z} / n)$ be the number of points satisfying $$x_1^2 + x_2^2 + x_3^2 + x_4^2 \equiv 1 \pmod{n} .$$

The following lemma is a consequence of a result due to Weil who counted points modulo primes for general diagonal equations. See \cite [pp. 142]{IreRosen} for example.

\begin{lemma}\label{Weil}
If $p$ is prime, then $$R_4 ( p ) = \left\{ \begin{array}{cc}
  2^3 & \text{if } p \text{ is even,} \\
 p^3 - p & \text{if } p \text{ is odd.} \\
\end{array} \right. .$$
\end{lemma}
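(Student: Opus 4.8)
The plan is to treat the two cases separately and, for the main odd case, to evaluate the point count by the classical method of additive characters and quadratic Gauss sums that underlies Weil's formula.

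First I would dispose of the even case $p = 2$. Over $\F_2$ one has $x^2 = x$ for every $x$, so the defining congruence $x_1^2 + x_2^2 + x_3^2 + x_4^2 \equiv 1$ collapses to the linear equation $x_1 + x_2 + x_3 + x_4 \equiv 1 \pmod 2$. Its solutions are exactly the $4$-tuples in $\F_2^4$ of odd weight, of which there are $2^3$, giving $R_4(2) = 2^3$.

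For odd $p$, fix the additive character $\psi(x) = e^{2\pi i x/p}$ on $\F_p$ and use the orthogonality relation that $\frac{1}{p}\sum_{t \in \F_p} \psi(tm)$ equals $1$ when $m \equiv 0$ and $0$ otherwise. Using this as a detector for the event $x_1^2 + \cdots + x_4^2 - 1 \equiv 0$ and separating variables, I would write
$$R_4(p) = \frac{1}{p} \sum_{t \in \F_p} \psi(-t) \left( \sum_{x \in \F_p} \psi(t x^2) \right)^4 .$$
The term $t = 0$ contributes $\frac{1}{p}\cdot p^4 = p^3$, so the remaining task is to show that the sum over $t \neq 0$ contributes exactly $-p$. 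For each $t \neq 0$ the inner sum is a quadratic Gauss sum, and the standard identity $\sum_{x \in \F_p} \psi(t x^2) = \chi(t)\, g$, where $\chi$ is the Legendre symbol and $g = \sum_{x \in \F_p} \psi(x^2)$, reduces its fourth power to $\chi(t)^4 g^4 = g^4$. Invoking the classical evaluation $g^2 = \chi(-1)\, p = (-1)^{(p-1)/2} p$, hence $g^4 = p^2$, the nonzero-$t$ contribution becomes $\frac{g^4}{p}\sum_{t \neq 0}\psi(-t) = \frac{p^2}{p}(-1) = -p$, where I used that $\sum_{t \in \F_p}\psi(-t) = 0$ forces $\sum_{t \neq 0}\psi(-t) = -1$. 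Adding the two contributions yields $R_4(p) = p^3 - p$.

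The only non-formal input is the pair of Gauss-sum facts, namely $\sum_{x}\psi(tx^2) = \chi(t)\,g$ and $g^2 = \chi(-1)p$, so the main obstacle, such as it is, is merely citing or reproducing these classical results, which are precisely the content of Weil's count for diagonal equations as in \cite[pp. 142]{IreRosen}. A tidy alternative that avoids Gauss sums altogether is to quote the general formula for the number of solutions of $x_1^2 + \cdots + x_n^2 = b$ over $\F_p$ with $n = 2m$ even and $b \neq 0$, namely $p^{n-1} - p^{m-1}\chi((-1)^m)$; specializing $n = 4$, $m = 2$, $b = 1$ gives $p^3 - p\,\chi(1) = p^3 - p$ at once.
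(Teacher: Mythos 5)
Your proof is correct, but it is worth noting that the paper does not actually prove this lemma at all: the lemma is stated as a direct consequence of Weil's count for diagonal equations, with \cite[pp. 142]{IreRosen} given as the reference, and no argument appears for either case. What you have done is supply the classical character-sum proof that underlies that citation. Your detector identity, the reduction $\sum_{x}\psi(tx^2)=\chi(t)\,g$, and the evaluation $g^2=\chi(-1)p$ are exactly the Gauss-sum machinery of the Ireland--Rosen treatment, and the bookkeeping is sound: the $t=0$ term gives $p^3$, while the $t\neq 0$ terms give $\frac{g^4}{p}\sum_{t\neq 0}\psi(-t) = p\cdot(-1) = -p$, so $R_4(p)=p^3-p$; the closed form $p^{n-1}-p^{m-1}\chi\left((-1)^m\right)$ you quote as an alternative specializes correctly to $p^3-p$ when $n=4$, $m=2$, $b=1$. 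You also treat $p=2$ explicitly via $x^2=x$ in $\F_2$, reducing the quadric to the hyperplane $x_1+x_2+x_3+x_4=1$ with $2^3$ points, a case the paper leaves implicit since Weil's formula concerns odd characteristic. In short, your write-up buys self-containedness (and covers the even case) at the cost of reproducing standard material, whereas the paper's citation buys brevity but delegates both the Gauss-sum computation and the specialization of the general diagonal-equation formula to the reader.
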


Next we count the points of the $3$-sphere over a finite field. 

\begin{lemma}\label{Hensel}
If $q = p^{e}$ is the power of a prime $p$, then $$R_4( q ) = \left\{ \begin{array}{cc}
  q^3 & \text{if } p \text{ is even,} \\
 q^3 \left( 1 - p^{-2} \right) & \text{if } p \text{ is odd.} \\
\end{array} \right. .$$
\end{lemma}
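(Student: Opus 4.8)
The plan is to compute $R_4(p^e) = \#\mathcal{S}(\mathbb{Z}/p^e)$ by lifting solutions of $x_1^2 + x_2^2 + x_3^2 + x_4^2 \equiv 1$ from $\mathbb{Z}/p^k$ to $\mathbb{Z}/p^{k+1}$, which is what the name of the lemma suggests, and to feed in the count $R_4(p)$ already established in Lemma \ref{Weil}. The cases $p$ odd and $p = 2$ behave very differently, because the hypersurface $f = x_1^2 + x_2^2 + x_3^2 + x_4^2 - 1$ is nonsingular modulo $p$ precisely when $p$ is odd, so I would treat them separately.

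For $p$ odd I would first record smoothness: the gradient of $f$ is $(2x_1, 2x_2, 2x_3, 2x_4)$, and at any point of $\mathcal{S}(\mathbb{F}_p)$ not all coordinates vanish (else the sum would be $0 \neq 1$) while $2$ is a unit, so some partial derivative is a unit. Hensel lifting then applies: given a solution $a \bmod p^k$ with $k \ge 1$, its refinements $a + p^k b$ with $b \in (\mathbb{Z}/p)^4$ satisfy $f(a + p^k b) \equiv f(a) + p^k \sum_j (\partial f/\partial x_j)(a)\, b_j \pmod{p^{k+1}}$, and since $f(a) \equiv 0 \pmod{p^k}$ this is a single nondegenerate affine condition on $b$, solved by exactly $p^3$ vectors. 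Hence $R_4(p^{k+1}) = p^3 R_4(p^k)$, giving $R_4(p^e) = p^{3(e-1)} R_4(p) = p^{3(e-1)}(p^3 - p) = q^3(1 - p^{-2})$ by Lemma \ref{Weil}.

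For $p = 2$ the gradient vanishes identically modulo $2$, the first-order lifting collapses, and a $2$-adic analysis is required; this is the step I expect to be the main obstacle. The Taylor expansion now gives $f(a + 2^k b) \equiv f(a) \pmod{2^{k+1}}$ for every $b$, since both the linear term $2^{k+1}\sum a_j b_j$ and the quadratic term $2^{2k}\sum b_j^2$ vanish mod $2^{k+1}$ once $k \ge 1$. Lifting is therefore all-or-nothing: a solution mod $2^k$ either lifts to all $16$ of its refinements mod $2^{k+1}$ or to none, according as $f(a) \equiv 0$ or $2^k \pmod{2^{k+1}}$, so $R_4(2^{k+1}) = 16\,B_{k+1}$ where $B_{k+1}$ counts the solutions mod $2^k$ whose value $f(a) \bmod 2^{k+1}$ is zero. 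To finish I would prove $B_{k+1} = \tfrac12 R_4(2^k)$ by exhibiting a fixed-point-free involution on the solutions mod $2^k$ that toggles $f(a) \bmod 2^{k+1}$ between $0$ and $2^k$: because $\sum a_j^2 \equiv 1 \pmod 2$ forces some coordinate to be odd, adding $2^{k-1}$ to the $j$th coordinate, with $j$ the least odd index, preserves being a solution mod $2^k$ and, for $k \ge 3$, satisfies $f(\cdot) \equiv f(a) + 2^k a_j \equiv f(a) + 2^k \pmod{2^{k+1}}$, so it pairs the two values evenly. This yields $R_4(2^{k+1}) = 8\,R_4(2^k)$ for $k \ge 3$, and together with the directly checked values $R_4(2) = 8$, $R_4(4) = 64$, $R_4(8) = 512$ an induction gives $R_4(2^e) = 2^{3e} = q^3$.
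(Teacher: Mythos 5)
Your argument is correct, and for odd $p$ it is essentially the paper's own proof: smoothness of $f = x_1^2+x_2^2+x_3^2+x_4^2-1$ modulo $p$ (the gradient $2(x_1,x_2,x_3,x_4)$ cannot vanish at a point of $\mathcal{S}$ since $2$ is a unit), Hensel lifting giving exactly $p^3$ lifts per solution, and induction starting from Lemma \ref{Weil}. The two proofs part ways at $p=2$. There the paper says only ``we proceed similarly for $q = 2^e$,'' and this cannot be taken literally: the gradient vanishes identically modulo $2$, Hensel's lemma does not apply, and a solution modulo $2^k$ is never covered by exactly $2^3$ solutions modulo $2^{k+1}$ --- as you observe, it is covered by $16$ of them or by none, and only the \emph{average} is $8$. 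Your all-or-nothing analysis (the value $f(a) \bmod 2^{k+1}$ is well defined on solutions modulo $2^k$ because the linear term $2^{k+1}\sum a_j b_j$ and the quadratic term $2^{2k}\sum b_j^2$ both die modulo $2^{k+1}$), together with the involution that adds $2^{k-1}$ to the least odd coordinate and so exchanges the fibre $f \equiv 0$ with the fibre $f \equiv 2^k \pmod{2^{k+1}}$, is exactly what is needed to make the even case rigorous; the restriction $k \geq 3$ (so that $2^{2k-2} \equiv 0 \pmod{2^{k+1}}$) is correctly compensated by the directly verified base cases $R_4(2) = 8$, $R_4(4) = 64$, $R_4(8) = 512$. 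The only detail a referee would ask you to write out is those three finite checks (e.g.\ modulo $8$ one counts tuples with exactly one odd coordinate and an even number of coordinates $\equiv 2 \pmod 4$, giving $4 \cdot 4 \cdot 32 = 512$), but that is routine. In short: same skeleton as the paper for odd $p$, and a genuinely more careful --- in fact, the only correct --- treatment of the prime $2$, where the paper's ``similarly'' glosses over a real failure of the lifting argument it invokes.
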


\begin{proof}
This follows from Lemma \ref{Weil} together with Hensel lifting and induction. Let $p$ be an odd prime. By Lemma \ref{Weil} the number of solutions to $x_1^2 + x_2^2 + x_3^2 + x_4^2 \equiv 1 \pmod{p}$ is $p^{3} - p$. Assume that the number of solutions to $x_1^2 + x_2^2 + x_3^2 + x_4^2 \equiv 1 \pmod{p^e}$ is $p^{3 e} - p^{3 e-2}$. Then by Hensel's lemma, for every $\left(
\begin{array}{cccc}
 x_1 & x_2 & x_3 & x_4 \\
\end{array}
\right)$ satisfying \\
$x_1^2+x_2^2+x_3^2+x_4^2 \equiv 1 \pmod{p^e}$, there are $p^3$ solutions modulo $p^{e+1}$ which reduce to the solution $\left(
\begin{array}{cccc}
 x_1 & x_2 & x_3 & x_4 \\
\end{array}
\right)$ modulo $p^{e}$. Hence the number of solutions to
$x_1^2 + x_2^2 + x_3^2 + x_4^2 \equiv 1 \pmod{p^{e+1}}$ is $$p^3 \left( p^{3 e} - p^{3 e-2} \right) = p^{3 (e + 1)} - p^{3 (e+1) - 2} .$$ The result follows by the principle of induction. We proceed similarly for $q = 2^e$.  
\end{proof}

The function $R_4(n)$ is multiplicative as the following result demonstrates.

\begin{lemma}\label{multiplicative}
If $\gcd \left( n_1, n_2 \right) = 1$, then $R_4 \left( n_1 \right) R_4 \left( n_2 \right) = R_4 \left( n_1 n_2 \right) $.
\end{lemma}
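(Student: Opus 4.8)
The plan is to deduce multiplicativity from the Chinese Remainder Theorem, realizing the solution set modulo $n_1 n_2$ as a product of the solution sets modulo $n_1$ and modulo $n_2$. Since $\gcd(n_1, n_2) = 1$, the simultaneous reduction map gives a ring isomorphism $\mathbb{Z}/n_1 n_2 \cong \mathbb{Z}/n_1 \times \mathbb{Z}/n_2$, sending a residue $x$ to the pair $(x \bmod n_1, \, x \bmod n_2)$.

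First I would apply this isomorphism coordinate-wise to a quadruple $X = \left(\begin{array}{cccc} x_1 & x_2 & x_3 & x_4 \end{array}\right)$ with entries in $\mathbb{Z}/n_1 n_2$, obtaining a pair $(X', X'')$ where $X'$ has coordinates in $\mathbb{Z}/n_1$ and $X''$ has coordinates in $\mathbb{Z}/n_2$. Because squaring and addition are polynomial operations preserved by any ring homomorphism, the quantity $x_1^2 + x_2^2 + x_3^2 + x_4^2$ reduces to the corresponding sums of squares computed over each factor ring.

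The key step is the elementary observation that, for coprime $n_1, n_2$, an integer is congruent to $1$ modulo $n_1 n_2$ if and only if it is congruent to $1$ modulo $n_1$ and congruent to $1$ modulo $n_2$. Applying this to $x_1^2 + x_2^2 + x_3^2 + x_4^2$, I conclude that $X$ lies on $\mathcal{S}(\mathbb{Z}/n_1 n_2)$ precisely when $X'$ lies on $\mathcal{S}(\mathbb{Z}/n_1)$ and $X''$ lies on $\mathcal{S}(\mathbb{Z}/n_2)$. Hence the CRT bijection restricts to a bijection
$$\mathcal{S}\left(\mathbb{Z}/n_1 n_2\right) \longleftrightarrow \mathcal{S}\left(\mathbb{Z}/n_1\right) \times \mathcal{S}\left(\mathbb{Z}/n_2\right),$$
and comparing cardinalities yields $R_4(n_1 n_2) = R_4(n_1) R_4(n_2)$.

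There is no serious obstacle here; the result is the standard functorial fact that the affine variety $\mathcal{S}$ has a point count that is multiplicative over coprime moduli, and the only point requiring care is checking that the CRT isomorphism interacts correctly with the defining quadratic form, which is immediate since ring homomorphisms preserve polynomial expressions. I would remark in passing that, because the group law $\oplus$ of Equation \eqref{grouplaw} is itself given by polynomials respected by the ring isomorphism, this bijection is in fact an isomorphism of groups $\mathcal{S}(\mathbb{Z}/n_1 n_2) \cong \mathcal{S}(\mathbb{Z}/n_1) \times \mathcal{S}(\mathbb{Z}/n_2)$; however, only the equality of cardinalities is needed for the lemma.
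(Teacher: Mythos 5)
Your proof is correct and follows essentially the same route as the paper: both invoke the Chinese Remainder Theorem to produce a bijection between $\mathcal{S}\left(\mathbb{Z}/n_1 n_2\right)$ and $\mathcal{S}\left(\mathbb{Z}/n_1\right) \times \mathcal{S}\left(\mathbb{Z}/n_2\right)$ and then compare cardinalities. Your write-up is in fact slightly more careful than the paper's, since you explicitly verify that the CRT isomorphism preserves the defining quadratic congruence in both directions.
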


\begin{proof}
By the Chinese remainder theorem there is a bijection \\
$\rho : \mathcal{S}(\mathbb{Z}/n_1) \times \mathcal{S}(\mathbb{Z}/n_2) \longrightarrow \mathcal{S}(\mathbb{Z}/(n_1 n_2) )$ by $$\left( X \pmod{n_1} , \ X \pmod{n_2} \right) \longmapsto X \pmod{n_1 n_2} .$$ 
There are $R_4 \left( n_1 \right) R_4 \left( n_2 \right)$ possible pairs of points \\
$\left( X \pmod{n_1} , \ X \pmod{n_2} \right)$. From the bijection $\rho$ it follows that \\
$R_4 \left( n_1 \right) R_4 \left( n_2 \right) = R_4 \left( n_1 n_2 \right) $.
\end{proof}

We are now able to state the order of the group $\mathcal{S} (\mathbb{Z} / n)$ for any positive integer $n$.

\begin{proposition}\label{mainres}
Let $R_4(n) = \# \mathcal{S} (\mathbb{Z} / n)$ be the number of points satisfying $$x_1^2 + x_2^2 + x_3^2 + x_4^2 \equiv 1 \pmod{n} .$$ Then 
\begin{equation*}
R_4(n) = n^3 \prod_{\substack{p \mid n  \\ p \text{ odd }}} \left( 1 - p^{-2} \right) .
\end{equation*} 
\end{proposition}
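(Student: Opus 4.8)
The plan is to reduce the general case to the prime-power case already settled in Lemma \ref{Hensel}, using the multiplicativity established in Lemma \ref{multiplicative}. First I would write $n$ in terms of its prime factorization $n = \prod_{i} p_i^{e_i}$, isolating the even prime if it happens to divide $n$. Since the prime-power factors $p_i^{e_i}$ are pairwise coprime, repeated application of Lemma \ref{multiplicative} gives $R_4(n) = \prod_i R_4(p_i^{e_i})$, expressing the count as a product over the prime powers exactly dividing $n$.

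Next I would substitute the explicit values furnished by Lemma \ref{Hensel}. For each odd prime power $p^e \mid\mid n$ we have $R_4(p^e) = (p^e)^3 (1 - p^{-2})$, whereas for the even part $2^a$ (when present) we have $R_4(2^a) = (2^a)^3$ with no correction factor. Collecting the cube factors and observing that their product is $\prod_i (p_i^{e_i})^3 = n^3$, the surviving factors are precisely $\prod_{p \mid n,\, p \text{ odd}} (1 - p^{-2})$, which is the claimed expression.

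I do not anticipate any serious obstacle: once the three preceding lemmas are in hand the proof is essentially bookkeeping. The one point deserving care is the asymmetric role of the prime $2$. Because $R_4(2^a)$ carries no factor of the form $(1 - 2^{-2})$, the product in the formula must range only over the \emph{odd} prime divisors of $n$, and one should confirm that the boundary cases---$n = 1$, where the empty product equals $1$ and $R_4(1) = 1 = 1^3$, and $n$ odd, where the even part is absent---are correctly accommodated by this convention.
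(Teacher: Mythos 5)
Your proof is correct and follows exactly the paper's own argument: factor $n$ into prime powers, apply Lemma \ref{multiplicative} to reduce to prime-power moduli, substitute the values from Lemma \ref{Hensel}, and recombine the cubes into $n^3$, with the prime $2$ contributing no correction factor. Your explicit attention to the boundary cases $n=1$ and odd $n$ is a small addition the paper leaves implicit, but the approach is the same.
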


\begin{proof}
If $\gcd \left( n_1, n_2 \right) = 1$, then $R_4 \left( n_1 \right) R_4 \left( n_2 \right) = R_4 \left( n_1 n_2 \right)$ by Lemma \ref{multiplicative}. Let $n = 2^{e_0} p_1^{e_1} p_2^{e_2} \dots p_m^{e_m}$, where the $p_1, p_2 \dots p_m$ are odd prime divisors of $n$ and $e_0 $ can be $0$. Then 
\begin{eqnarray*}
R_4(n) & = & R_4 \left( 2^{e_0} \right) R_4 \left( p_1^{e_1} \right) \dots R_4 \left( p_m^{e_m} \right) , \\
       & = & 2^{3 e_0} p_1^{3 e_1} \left( 1 - p_1^{-2} \right) \dots p_m^{3 e_m} \left( 1 - p_m^{-2} \right) , \\
       & = & n^3 \prod_{\substack{p \mid n  \\ p \text{ odd }}} \left( 1 - p^{-2} \right) .
\end{eqnarray*} 
\end{proof}

\begin{corollary}\label{maincor}
$n$ is an odd prime if and only if the number of solutions to the congruence $$x_1^2 + x_3^2 + x_3^2 + x_4^2 \equiv 1 \pmod{n}$$ is equal to $n^3 - n$; equivalently, if and only if $$\prod_{\substack{p \mid n  \\ p \text{ odd }}} \left( 1 - \frac{1}{p^2} \right)^{-1} = \left( 1 - \frac{1}{n^2}\right)^{-1} .$$
\end{corollary}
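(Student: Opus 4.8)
The plan is to reduce both stated conditions to Proposition \ref{mainres} and then analyze the resulting product identity by cases on the number of distinct odd prime divisors of $n$. Note first that the two conditions in the statement are equivalent by trivial algebra: dividing the formula of Proposition \ref{mainres} by $n^3$ shows that $R_4(n) = n^3 - n$ holds if and only if $\prod_{p \mid n,\, p \text{ odd}} (1 - p^{-2}) = 1 - n^{-2}$, and taking reciprocals turns this into the displayed product identity. So it suffices to characterize when $R_4(n) = n^3 - n$.

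The forward direction is immediate. If $n$ is an odd prime $p$, then $p$ is the only odd prime dividing $n$, so Proposition \ref{mainres} gives $R_4(n) = n^3 (1 - n^{-2}) = n^3 - n$, as required.

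For the converse, I would assume $R_4(n) = n^3 - n$, write $p_1 < \cdots < p_m$ for the distinct odd prime divisors of $n$, and record the hypothesis in the form
\[
\prod_{i=1}^m (1 - p_i^{-2}) = 1 - n^{-2}.
\]
I would then split on $m$. If $m = 0$ (so $n = 1$ or $n$ is a power of $2$), the left side is the empty product $1$, which cannot equal $1 - n^{-2} < 1$. If $m = 1$, the left side equals $1 - p_1^{-2}$, so the identity forces $n^{-2} = p_1^{-2}$, i.e. $n = p_1$; since $p_1 \mid n$, this says precisely that $n$ is an odd prime, which is the desired conclusion.

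The substantive case is $m \ge 2$, which I expect to be the main obstacle and which I would eliminate by an elementary inequality. Setting $P = p_1 \cdots p_m$ for the odd radical, the claim is that for real numbers $x_1, \ldots, x_m \in (0, \tfrac12)$ with $m \ge 2$ one has $\prod_{i=1}^m (1 - x_i) < 1 - \prod_{i=1}^m x_i$, equivalently $\prod_{i=1}^m (1-x_i) + \prod_{i=1}^m x_i < 1$. This follows directly: writing $a = \prod_{i=1}^{m-1}(1-x_i)$, $b = \prod_{i=1}^{m-1} x_i$ and $y = x_m$, each factor satisfies $1 - x_i > x_i$, so $a > b$ and $a < 1$, whence $\prod_{i=1}^m(1-x_i) + \prod_{i=1}^m x_i = a - y(a-b) < a < 1$. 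Applying this with $x_i = p_i^{-2} \le \tfrac19$ gives $\prod_{i=1}^m (1 - p_i^{-2}) < 1 - P^{-2}$, and since $P \mid n$ forces $P \le n$ and hence $1 - P^{-2} \le 1 - n^{-2}$, the left side of the identity is strictly smaller than the right side. This contradiction rules out $m \ge 2$ and, together with the cases above, completes the converse.
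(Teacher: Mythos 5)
Your proof is correct, and its forward direction, together with the reduction of both displayed conditions to the single equation $\prod_{p \mid n,\, p \text{ odd}} \left( 1 - p^{-2} \right) = 1 - n^{-2}$, agrees with the paper; the genuine difference is in how the converse is closed out. The paper clears denominators to get the integer identity $2^{2 e_0} p_1^{2 e_1 - 2} \cdots p_m^{2 e_m - 2} \left( p_1^2 - 1 \right) \cdots \left( p_m^2 - 1 \right) = n^2 - 1$, rules out $e_0 > 0$ by parity, forces every $e_j = 1$ by reducing modulo $p_j$ (when $2 e_j - 2 > 0$ the left side is $0$ while the right side is $-1$ modulo $p_j$), and only then, with $n = p_1 \cdots p_m$ odd and squarefree, applies the inequality $\left( p_1^2 - 1 \right) \cdots \left( p_m^2 - 1 \right) < p_1^2 \cdots p_m^2 - 1$ for $m > 1$. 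You skip the reduction to the squarefree case entirely: your elementary lemma $\prod_i \left( 1 - x_i \right) + \prod_i x_i < 1$ for $m \ge 2$, $x_i \in \left( 0, \tfrac{1}{2} \right)$, gives $\prod_i \left( 1 - p_i^{-2} \right) < 1 - P^{-2}$ with $P = p_1 \cdots p_m$ the odd radical, and the single observation that $P \mid n$ forces $P \le n$ absorbs the power of $2$ and all repeated prime factors in one stroke; likewise your $m = 1$ case dispatches $n = 2^{e_0} p_1^{e_1}$ in one line via $n^2 = p_1^2$. Note that your inequality is the paper's in disguise (multiply through by $P^2$), so the two arguments share their analytic core; what your route buys is the elimination of the parity and mod-$p_j$ case analysis, yielding a shorter and more uniform argument, while the paper's route stays inside integer congruences, making each failure mode of compositeness (even part, repeated primes, several distinct primes) explicit and separately visible.
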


\begin{proof}
If $n$ is an odd prime, then by Proposition \ref{mainres}, $$\# \mathcal{S} \left( \mathbb{Z}/n \right) = n^3 \left( 1 - n^{-2} \right) = n^3 - n .$$ If $n$ is composite, let $n = 2^{e_0} p_1^{e_1} \dots p_m^{e_m}$ be the factorization of $n$, where $e_0 \geq 0$ and $p_1, p_2, \dots , p_m$ are precisely the prime divisors of $n$. Suppose $$n^3 \left( 1 - p_1^{-2} \right) \left( 1 - p_2^{-2} \right) \dots \left( 1 - p_m^{-2} \right) = n^3 - n .$$ Then $$2^{2 e_0} p_1^{2 e_1-2} \dots p_m^{2 e_m-2} \left( p_1^{2} - 1\right) \left( p_2^{2} - 1 \right) \dots \left( p_m^{2} - 1 \right) = n^{2} -1 .$$ Clearly $n$ cannot be even so $e_0 = 0$. Furthermore, reduction modulo $p_j$ where $p_j$ is an odd prime divisor of $n$ gives the contradiction \\
$0 \equiv -1 \pmod{p_j}$ when $2 e_j - 2 > 0$. It follows that $$n = p_1 p_2 \dots p_m $$ and $$ \left( p_1^{2} - 1\right) \left( p_2^{2} - 1 \right) \dots \left( p_m^{2} - 1 \right) = p_1^2 p_2^2 \dots p_m^2 -1 .$$ Since $p_m > p_{m-1} > \dots p_1 > 2$, $$ \left( p_1^{2} - 1\right) \left( p_2^{2} - 1 \right) \dots \left( p_m^{2} - 1 \right) < p_1^2 p_2^2 \dots p_m^2 -1 ,$$ a contradiction when $m > 1$. Hence we must have $m = 1$ and $n$ is prime. 

For the equivalent statement, observe that if $n$ is an odd positive composite integer, then $$n^2 \prod_{\substack{p \mid n  \\ p \text{ odd }}} \left( 1 - p^{-2} \right) < n^2 - 1 $$ so $R_4(n) < n^3 - n$ and $$\prod_{\substack{p \mid n  \\ p \text{ odd }}} \left( 1 - \frac{1}{p^2} \right)^{-1} > \left( 1 - \frac{1}{n^2}\right)^{-1} .$$ If $n$ is prime, then we have equality. 	  
\end{proof}

\section{Twin primes}

It is thought, according to the twin prime conjecture, that there are infinitely many pairs of primes whose difference is $2$, known as twin primes. Brun \cite{Brun} proved that the sum of the reciprocals of the twin primes, $$\frac{1}{3} + \frac{1}{5} + \frac{1}{5} + \frac{1}{7} + \frac{1}{11} + \frac{1}{13} \approx 1.90216058$$ converges. In this section we prove that there is an infinite series which converges to $N + \tau (s)$ if and only if there are exactly $N$ pairs of twin primes, where $\tau (s)$ is a positive constant depending on $s$. 

Clement \cite{Clement} proved that $n$ and $n+2$ with $n > 1$ are both prime if and only if $$4 ((n-1)! + 1) + n \equiv 0 \pmod{n(n+2)} .$$ As a consequence of Corollary \ref{maincor} we have the following statement about twin primes similar to Clement's result, which we will use to write our infinite series \eqref{mainser}.

\begin{corollary}\label{twins}
The odd positive integers $n$ and $n+2$ are twin primes if and only if 
\begin{equation}\label{twineqn}
\prod_{\substack{p \mid n(n+2)  \\ p \text{ odd }}} \left( 1 - \frac{1}{p^{2}} \right)^{-1} = \frac{n^2 (n+2)^2}{(n-1) (n+1)^2 (n+3)} . 
\end{equation}
\end{corollary}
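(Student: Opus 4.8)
The plan is to reduce \eqref{twineqn} to two independent instances of Corollary \ref{maincor}, one for $n$ and one for $n+2$. Introduce the shorthand $g(k) = \left(1 - k^{-2}\right)^{-1} = \frac{k^2}{k^2-1}$ and $f(k) = \prod_{p \mid k,\, p \text{ odd}} \left(1 - p^{-2}\right)^{-1}$. First I would observe that the right-hand side of \eqref{twineqn} is nothing but $g(n)\,g(n+2)$, since
$$g(n)\,g(n+2) = \frac{n^2}{(n-1)(n+1)} \cdot \frac{(n+2)^2}{(n+1)(n+3)} = \frac{n^2(n+2)^2}{(n-1)(n+1)^2(n+3)}.$$
For the left-hand side, because $n$ is odd we have $\gcd(n,n+2) = \gcd(n,2) = 1$, so $n$ and $n+2$ are coprime and each odd prime dividing $n(n+2)$ divides exactly one of them. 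The product over odd prime divisors therefore factors as $f(n)\,f(n+2)$, and \eqref{twineqn} becomes equivalent to the single identity $f(n)\,f(n+2) = g(n)\,g(n+2)$.

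Next I would recall the sharp form of Corollary \ref{maincor} furnished by its proof: for every odd integer $k > 1$ one has $f(k) \geq g(k)$, with equality if and only if $k$ is prime, the strict inequality for composite $k$ being exactly the estimate displayed at the end of that proof. The forward implication of the present corollary is then immediate: if $n$ and $n+2$ are twin primes, then $f(n) = g(n)$ and $f(n+2) = g(n+2)$, and multiplying yields \eqref{twineqn}.

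For the converse I would use that equality in a product of positive inequalities forces equality in each factor. From $f(n) \geq g(n) > 0$ and $f(n+2) \geq g(n+2) > 0$ it follows that $f(n)f(n+2) \geq g(n)g(n+2)$; moreover
$$f(n)f(n+2) - g(n)g(n+2) = f(n+2)\bigl(f(n) - g(n)\bigr) + g(n)\bigl(f(n+2) - g(n+2)\bigr)$$
is a sum of two nonnegative terms, so it can vanish only when $f(n) = g(n)$ and $f(n+2) = g(n+2)$ simultaneously. By Corollary \ref{maincor} this forces both $n$ and $n+2$ to be prime, completing the converse.

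The step that needs the most care is this last passage from equality of the two products to equality of each factor; here it is essential that every quantity is positive, and recording the argument through the algebraic identity above (rather than by dividing through) keeps it transparent. I would also flag at the outset that \eqref{twineqn} tacitly assumes $n \geq 3$: the denominator $(n-1)(n+1)^2(n+3)$ vanishes at $n = 1$ and $g(1)$ is undefined, so the degenerate case $n = 1$, which in any event does not produce twin primes, is excluded on these grounds.
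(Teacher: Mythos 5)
Your proof is correct and takes essentially the same approach as the paper: both split the product over the odd primes of $n(n+2)$ using $\gcd(n,n+2)=1$ and then invoke Corollary \ref{maincor} together with the strict inequality for odd composites established in its proof (the paper phrases this through the point counts $R_4$ and their multiplicativity from Lemma \ref{multiplicative}, you work directly with the Euler-type products, but the skeleton is identical). Your explicit ``sum of two nonnegative terms'' identity for the converse is just a more carefully recorded version of the paper's observation that the product of counts is strictly smaller when either factor is composite, and your exclusion of $n=1$ is a reasonable point of care that the paper leaves implicit.
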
  

\begin{proof}
We prove that the odd positive integers $p$ and $p+2$ are twin primes if and only if there are exactly $p^6+6 p^5+10 p^4-11 p^2-6 p $ solutions to the congruence $$x_1^2 + x_2^2 + x_3^2 + x_4^2 \equiv 1 \pmod{p^2 + 2 p} .$$

If $p$ and $p+2$ are twin primes, then by Corollary \ref{maincor}, the number of solutions to the congruence $$x_1^2 + x_2^2 + x_3^2 + x_4^2 \equiv 1 \pmod{p(p+2)}$$ is equal to $\left( p^3 - p \right) \left( (p+2)^3 - (p+2) \right)$. 

If $n$ is an odd positive composite integer, then $R_4(n) < n^3 - n$. Hence if $p$ or $p+2$ is composite, then the number of solutions to the congruence $$x_1^2 + x_2^2 + x_3^2 + x_4^2 \equiv 1 \pmod{p(p+2)}$$ is less than $\left( p^3 - p \right) \left( (p+2)^3 - (p+2) \right)$, which is equal to \\
$p^6 + 6 p^5 + 10 p^4 - 11 p^2 - 6 p $. The statement on \eqref{twineqn} follows. 
\end{proof}

Reflecting on Corollary \ref{twins}, we have the following proposition which shows that the twin prime conjecture is equivalent to the statement that $\omega (s)$ diverges for some real value of $s > 2$, say $s = 3$, where $\omega (s)$ is the series given by \eqref{mainser}. 

\begin{proposition}\label{propseries}
Let 
\large 
\begin{align*}
 E(x) & = \left( 1 - \frac{1}{x^2} \right)^{-1} , & A_n & = \left(
\begin{array}{cc}
 \displaystyle\prod_{p \mid 2 n + 1} E(p) & E(2 n + 1)  \\
 E(2 n + 3)  & \displaystyle\prod_{p \mid 2 n + 3} E(p)  \\
\end{array}
\right) ,
\end{align*}
\Large 
where the products in $A_n$ is taken over prime divisors. If $s > 2$ is a real number, then the series 
\begin{equation}\label{mainser}
\omega (s) = \sum_{n=1}^{\infty} \frac{1}{1 + \left| A_n \right| n^s}
\end{equation}
converges to $N + \tau (s)$ if and only if there are finitely many pairs of twins primes $N$, where $\tau (s)$ is a positive constant depending on $s$. 
\end{proposition}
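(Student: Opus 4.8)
The plan is to show that the determinant $\left| A_n \right|$ is precisely the quantity whose vanishing, by Corollary \ref{twins}, detects twin primes. Writing $m = 2n+1$, so that $m$ and $m+2 = 2n+3$ are the consecutive odd integers in question, I first record that $\gcd(m, m+2) = 1$ and both are odd, so every prime dividing $m(m+2)$ divides exactly one of $m, m+2$; hence $\prod_{p \mid m} E(p) \cdot \prod_{p \mid m+2} E(p) = \prod_{p \mid m(m+2)} E(p)$, which is exactly the left-hand side of \eqref{twineqn}. A direct computation, using $m^2 - 1 = (m-1)(m+1)$ and $(m+2)^2 - 1 = (m+1)(m+3)$, gives $E(m) E(m+2) = \frac{m^2(m+2)^2}{(m-1)(m+1)^2(m+3)}$, which is exactly the right-hand side of \eqref{twineqn}. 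Expanding the determinant, $\left| A_n \right| = \prod_{p \mid m} E(p)\,\prod_{p \mid m+2} E(p) - E(m)E(m+2)$ is therefore the difference of the two sides of \eqref{twineqn}, so by Corollary \ref{twins} we have $\left| A_n \right| = 0$ if and only if $2n+1$ and $2n+3$ are twin primes.

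Next I would establish that $\left| A_n \right| > 0$ otherwise, together with the quantitative lower bound needed to control the tail. Abbreviating $a = 2n+1$, $b = 2n+3$ and $P(k) = \prod_{p \mid k} E(p)$, the identity $P(a)P(b) - E(a)E(b) = P(b)\big(P(a) - E(a)\big) + E(a)\big(P(b) - E(b)\big)$ shows, since $E(a), P(b) \geq 1$ and each of $P(a)-E(a)$, $P(b)-E(b)$ is nonnegative (this nonnegativity, with equality exactly when the argument is prime, is the content of the composite case of Corollary \ref{maincor}), that $\left| A_n \right| \geq \big(P(a) - E(a)\big) + \big(P(b) - E(b)\big) \geq 0$. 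The crux is the elementary estimate that for composite odd $k$ with least prime factor $p \leq \sqrt{k}$ one has $P(k) - E(k) \geq 1/k$: for a prime power $k = p^e$ this reduces to $E(p) - E(p^e) \geq E(p) - E(p^2) = p^2/(p^4-1) \geq 1/p^2 \geq 1/k$, while if $k$ has two distinct prime factors $p < q$ then $P(k) \geq E(p)E(q)$ gives $P(k) - E(k) \geq \tfrac{1}{p^2 - 1} + \tfrac{1}{q^2-1} - \tfrac{1}{k^2-1} \geq \tfrac{1}{p^2-1} \geq 1/k$. Since at least one of $a, b$ is composite whenever $(2n+1, 2n+3)$ is not a twin prime pair, this yields $\left| A_n \right| \geq 1/(2n+3)$ for all such $n$. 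I expect this quantitative bound to be the main obstacle, both because it requires the case analysis above and because it is exactly what pins the convergence threshold at $s > 2$.

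Finally I would assemble the series. Let $T = \{ n \geq 1 : 2n+1 \text{ and } 2n+3 \text{ are both prime}\}$ and let $C$ be its complement in $\mathbb{N}$; these partition the index set, and $n \mapsto (2n+1, 2n+3)$ is a bijection from $T$ onto the set of twin prime pairs. For $n \in T$ the term is $1/(1 + 0) = 1$, while for $n \in C$ the bound $\left| A_n \right| \geq 1/(2n+3)$ gives $\frac{1}{1 + \left| A_n \right| n^s} \leq \frac{2n+3}{n^s} \leq \frac{5}{n^{s-1}}$. Hence $\tau(s) := \sum_{n \in C} \frac{1}{1 + \left| A_n \right| n^s}$ is dominated by $5 \sum_{n \geq 1} n^{-(s-1)}$, which converges for $s > 2$; moreover $\tau(s) > 0$ since $C \neq \varnothing$ (for instance $n = 4$, as $2n+1 = 9$). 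Splitting the series gives $\omega(s) = \sum_{n \in T} 1 + \tau(s) = \left| T \right| + \tau(s)$. If there are only finitely many twin prime pairs, then $\left| T \right| = N < \infty$ and $\omega(s) = N + \tau(s)$ converges; conversely, if $\omega(s)$ converges, then its nonnegative terms tend to $0$, so only finitely many can equal $1$, forcing $\left| T \right| = N < \infty$ and again $\omega(s) = N + \tau(s)$. Thus $\omega(s)$ converges, necessarily to $N + \tau(s)$, if and only if there are finitely many twin prime pairs.
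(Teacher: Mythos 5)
Your proof is correct, and it follows the same overall decomposition as the paper's: terms with $\left| A_n \right| = 0$ contribute exactly $1$ and are in bijection with the twin prime pairs, while the remaining terms form a series convergent for $s > 2$; convergence of $\omega(s)$ then forces only finitely many $1$'s. The genuine difference lies in the key quantitative lemma, the positive lower bound on $\left| A_n \right|$ at non-twin indices. The paper asserts, essentially without justification, that the least positive value of $\left| A_n \right|$ occurs when $2n+1$ is prime and $2n+3$ is the square of a prime; this yields $\left| A_n \right| \geq E(2n+1) E\left( \sqrt{2n+3} \right) - g^2(n)/h(n)$ with $g(n) = (2n+1)(2n+3)$ and $h(n) = (g(n)-1)^2 - 4$, hence $F(s,n) \leq h(n)/\left( h(n) + (2n+1)^2(2n+3) n^s \right) < s/n^{s-1}$ and convergence by comparison with $s \zeta(s-1)$. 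You instead prove your bound in full: the splitting $\left| A_n \right| = P(b)\bigl(P(a)-E(a)\bigr) + E(a)\bigl(P(b)-E(b)\bigr)$ with $a = 2n+1$, $b = 2n+3$, $P(k) = \prod_{p \mid k} E(p)$, the fact extracted from Corollary \ref{maincor} that $P(k) \geq E(k)$ with equality exactly when $k$ is prime, and the elementary two-case estimate $P(k) - E(k) \geq 1/k$ for odd composite $k$ (prime power versus two distinct prime factors), giving $\left| A_n \right| \geq 1/(2n+3)$. Your bound has the same order $\Theta(1/n)$ as the paper's, so it pins the same threshold $s > 2$, but unlike the paper's extremal claim it comes with a complete argument; indeed, justifying the paper's ``least positive value'' assertion would require essentially the case analysis you supply. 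You also make explicit, via coprimality of $2n+1$ and $2n+3$ and the identity $E(m)E(m+2) = \frac{m^2(m+2)^2}{(m-1)(m+1)^2(m+3)}$, why $\left| A_n \right| = 0$ is precisely the condition of Corollary \ref{twins}, a step the paper leaves implicit when citing that corollary.
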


\begin{proof}
Let
\begin{align*}
\tau (s, m) & = \sum_{n=1}^{m} F(s, n) , & F(s, n) & = \left\{ \begin{array}{cc}
  0 & \text{if } \left| A_n \right| = 0, \\
 \left( 1 + \left| A_n \right| n^s \right)^{-1} & \text{otherwise.} \\
 \end{array} \right. 
\end{align*}
The least positive value $\left| A_n \right|$ can attain occurs when $2 n + 1$ and $\sqrt{2 n + 3}$ are both prime. Let $g(n) = (2 n+1)(2 n+3)$ and $h(n) = \left( g(n) - 1\right)^2 - 4$. It follows that if $\left| A_n \right| \not= 0$, then\large $$\frac{\pi^2}{6} - 1 > \left| A_n \right| \geq E(2 n + 1) E \left( \sqrt{2 n + 3} \right) - \frac{g^2(n)}{h(n)} ,$$ \Large and we have
\begin{equation*}
F(s, n) \leq \frac{h(n)}{ h(n) + (2 n + 1)^2 (2 n + 3) n^s} < \frac{s}{n^{s-1}} . 
\end{equation*}
When $s > 2$ the series $ \displaystyle\sum_{n=1}^{\infty } F(s, n) < s \zeta (s-1)$ converges to \\
$\tau (s) = \displaystyle\lim_{m \longrightarrow \infty } \tau (s, m)$ by the $p$-test and the comparison test, where $\zeta (s)$ is the Riemann zeta function. By Corollary \ref{twins} $$\sum_{n=1}^{m} \frac{1}{1 + \left| A_n \right| n^s} = \pi_2(2 m + 3) + \tau (s, m) ,$$ where $\pi_2(x)$ is the number of pairs of twin primes $(p, q)$ satisfying $q \leq x$. It follows that if there are a total of $N$ pairs of twin primes, then the series \eqref{mainser} converges to $N + \tau (s)$. If there are infinitely many pairs of twin primes, then the series \eqref{mainser} adds the summand $1$ infinitely many times and hence the series diverges.  
\end{proof}

$\pi_2(x)$ was considered by Hardy and Littlewood \cite[pp. 32]{HL}, where it was estimated that
\large 
\begin{align*}
\pi_2 (x) & \approx \frac{2 C_2 x}{\log^2 (x)} = 2 C_2 \int_2^{n} \frac{dt}{\log^2(t) }, & C_2 & = \prod_{p \geq 3} \left( 1 - \frac{1}{(p-1)^2} \right) \approx 0.66016.
\end{align*}
\Large 
Brent \cite{Brent} computed $\pi_2(x)$ for $x < 8 \times 10^{10}$. For an introduction to computation with twin primes see \cite{SebahGourdon}. Calculations in the table below show that $\tau (3) \approx 0.474004103627$. 

\normalsize 
\begin{center}
\begin{tabular}{|c | c | c | c | c|} 
 \hline
 $\log_{10}(m)$ & $\pi_2 (2 m + 3)$ & $\tau (1, m)$ & $\tau (2, m)$ & $\tau (3, m)$  \\ [0.5ex] 
 \hline\hline
 1 & \hfill 4 & \hfill 3.225 & 1.238501511411617 & 0.424789649215940 \\  
 \hline
 2 & \hfill 15 & \hfill 18.619 & 2.088852995430603 & 0.472943322728998 \\ 
 \hline
 3 & \hfill 61 & \hfill 65.555 & 2.305530261242241 & 0.473970392946628 \\ 
 \hline
 4 & \hfill 342 & \hfill 229.208 & 2.377134528816283 & 0.474002620057820 \\ 
 \hline
 5 & \hfill 2160 & \hfill 982.657 & 2.409562315521043 & 0.474004021326231 \\ 
 \hline
 6 & \hfill 14871 & \hfill 5166.336 & 2.427525024274653 & 0.474004098679028 \\ 
 \hline
 7 & \hfill 107407 & \hfill 31095.487 & 2.438465311832159 & 0.474004103304255 \\ 
 \hline
 8 & \hfill 813371 & \hfill 203071.953 & 2.445670220381749 & 0.474004103605379 \\ 
 \hline
 9 & \hfill 6388041 & \hfill 1397233.895 & 2.450652571972753 & 0.474004103626066 \\ 
 \hline
 10 & \hfill 51509099 & 10016194.267 & 2.454233705593651 & 0.474004103627544 \\ 
  \hline
\end{tabular}
\end{center}
\Large

\vfill

\newpage

While it is easy to prove that $\tau (1, m)$ diverges, Figure \ref{tau2m} shows a close approximation 
\begin{equation}\label{tau2mod}
\tau \left( 2, 10^x \right) \approx 2.47299 - \frac{1.63688}{x^2 + 0.325582} ,
\end{equation}
suggesting that $\tau (2, m)$ converges. 
\begin{figure}[h]
\centering
  \includegraphics[width={8.0cm}]{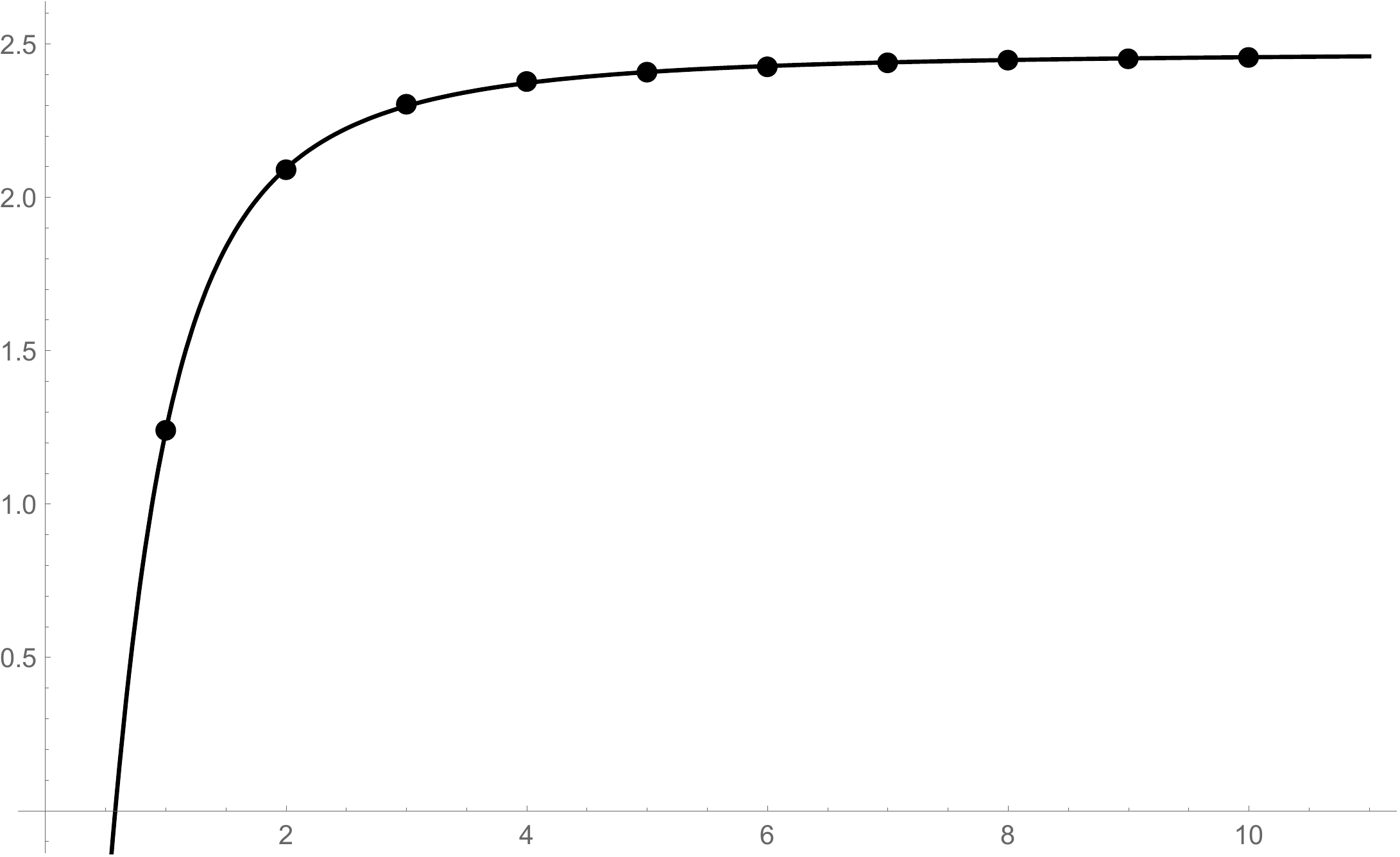}
  \caption{The model of $\tau (2, m)$ given by \eqref{tau2mod}.}
  \label{tau2m}
\end{figure} 

\begin{conjecture} \ 
\begin{enumerate}
\item[{\bf{(a)}}] For all positive integers $m \geq 10$, 
\begin{equation}\label{conject2}
 1 + \tau (3) < \sum_{n=m}^{2 m} \frac{1}{1 + \left| A_n \right| n^3} .
 \end{equation}
\item[{\bf{(b)}}] For $s > 2$, there is an integer $c$ depending on $s$ such that for all $m > c$, \large 
\begin{equation}\label{conject1}
\sum_{n=m}^{2 m} \frac{1}{\log^2 (n+1)} < \sum_{n=m}^{2 m} \frac{1}{1 + \left| A_n \right| n^s} .
\end{equation}
\Large 
\end{enumerate}
\end{conjecture}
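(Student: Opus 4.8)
The plan is to convert both inequalities into statements about a single counting function---the number of twin pairs in a dyadic window---so that the elementary content separates cleanly from the arithmetic obstruction. First I would show that $|A_n|$ (the determinant of $A_n$) is precisely the defect in Corollary \ref{twins}. Writing $N=2n+1$, so that $N+2=2n+3$, both odd and coprime, the product of the diagonal entries is
\[
\left(\prod_{p\mid 2n+1}E(p)\right)\left(\prod_{p\mid 2n+3}E(p)\right)=\prod_{\substack{p\mid N(N+2)\\ p\text{ odd}}}\left(1-\tfrac{1}{p^2}\right)^{-1},
\]
the left-hand side of \eqref{twineqn} at $N$, while a direct computation identifies the product of the off-diagonal entries,
\[
E(2n+1)\,E(2n+3)=\frac{N^2(N+2)^2}{(N-1)(N+1)^2(N+3)},
\]
with the right-hand side of \eqref{twineqn}. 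Thus $|A_n|$ equals (left side) $-$ (right side) of \eqref{twineqn}. The monotonicity argument from the proof of Corollary \ref{maincor} gives $\prod_{p\mid M}E(p)\ge E(M)$ for odd $M$, with equality iff $M$ is prime (the least prime factor of a composite $M$ is at most $\sqrt M$, and $E$ is decreasing). Multiplying the two instances shows $|A_n|\ge 0$, with $|A_n|=0$ exactly when $2n+1,2n+3$ are twin primes.

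\textbf{Decomposing the window sum.} Put $T(m)=\#\{n\in[m,2m]:2n+1,2n+3\text{ both prime}\}$; as $n$ traverses $[m,2m]$ the smaller twin $2n+1$ traverses the odd integers of $[2m+1,4m+1]$, so $T(m)$ counts twin pairs in that window. With $F(s,n)$ as in Proposition \ref{propseries}, the summand of \eqref{mainser} equals $1$ when $|A_n|=0$ and equals $F(s,n)$ otherwise, so
\[
\sum_{n=m}^{2m}\frac{1}{1+|A_n|n^s}=T(m)+R(m,s),\qquad R(m,s)=\sum_{\substack{n=m\\ |A_n|\neq 0}}^{2m}F(s,n).
\]
The bound $F(s,n)<s/n^{s-1}$ of Proposition \ref{propseries} gives $0\le R(m,s)<s\sum_{n=m}^{2m}n^{1-s}\to 0$ for $s>2$; moreover $R(m,3)<\tau(3)$, since it is a subsum of the convergent series $\tau(3)=\sum_{n\ge 1}F(3,n)$.

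\textbf{Reducing the two parts.} For part (a), \eqref{conject2} becomes $T(m)>1+\tau(3)-R(m,3)$; because $0\le R(m,3)<\tau(3)<1$ the right-hand side lies strictly between $1$ and $1+\tau(3)<2$, so \eqref{conject2} holds if and only if $T(m)\ge 2$. The remaining task is to prove that every window $[2m+1,4m+1]$ with $m\ge 10$ contains at least two twin pairs; I would verify a finite initial range by computation and then seek the bound $T(m)\ge 2$ for all larger $m$. For part (b), \eqref{conject1} becomes $T(m)>\sum_{n=m}^{2m}\log^{-2}(n+1)-R(m,s)$, and since $R(m,s)\to 0$ while the logarithmic sum grows like $m/\log^2 m$, this is eventually equivalent to $T(m)>\sum_{n=m}^{2m}\log^{-2}(n+1)$. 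The Hardy--Littlewood density $2C_2/\log^2(2n+1)$ at index $n$ predicts $T(m)\approx\sum_{n=m}^{2m}2C_2/\log^2(2n+1)$, whose ratio to the left side of \eqref{conject1} tends to $2C_2\approx 1.32>1$; this positive margin is what should furnish the constant $c=c(s)$.

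\textbf{The main obstacle.} The first two steps are unconditional and reduce the Conjecture to the assertions $T(m)\ge 2$ for all $m\ge 10$ (part (a)) and $T(m)\gg m/\log^2 m$ eventually (part (b)). Both are lower bounds for twin primes in dyadic intervals and are strictly stronger than the twin prime conjecture; they are beyond current sieve methods, which yield bounded gaps between primes but neither the gap $2$ nor interval lower bounds of this strength. Under the Hardy--Littlewood conjecture the reductions close with room to spare, so the genuine difficulty is precisely this unconditional twin-prime lower bound, a proof of which would settle the twin prime conjecture itself.
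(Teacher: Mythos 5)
You were asked to prove a statement that the paper itself labels a \emph{Conjecture} and does not prove: the author's only commentary is that both (a) and (b) imply the twin prime conjecture, that \eqref{conject2} ``says that there is a pair of twin primes between $m$ and $2m$,'' and that \eqref{conject1} ``is speculation.'' So there is no proof in the paper to compare yours against, and your decision to stop at an unconditional reduction rather than claim a proof is the correct one.

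Your reduction is sound, and it is in fact sharper than the paper's own gloss. The identification of $\left| A_n \right|$ as (left side) $-$ (right side) of \eqref{twineqn}, hence $\left| A_n \right| \geq 0$ with equality exactly at twin pairs, is precisely the mechanism underlying Corollary \ref{twins} and Proposition \ref{propseries}; splitting the window sum as $T(m) + R(m,s)$ then shows that \eqref{conject2} is \emph{equivalent} to $T(m) \geq 2$, i.e.\ to there being at least two twin pairs $(2n+1, 2n+3)$ with $n \in [m, 2m]$, a finer statement than the paper's remark that there is ``a pair'' in the window. Three caveats. First, this equivalence needs $\tau(3) < 1$ as a proved inequality, not just the table value $\tau(3) \approx 0.474$; this is easy to supply, since the paper's bound $F(3,n) < 3/n^{2}$ gives a tail bound $\sum_{n > M} F(3,n) < 3/M$, which combined with exact computation of the first $M = 10$ terms yields $\tau(3) < 0.43 + 0.3 < 1$. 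Second, your Hardy--Littlewood heuristic for (b) drops a factor of $2$: the density $2 C_2 / \log^2 q$ is per unit of $q$, and $q = 2n+1$ advances by $2$ per index $n$, so the predicted count is $\sum_{n} 4 C_2 / \log^2 (2n+1)$ and the limiting ratio is $4 C_2 \approx 2.64$ rather than $2 C_2 \approx 1.32$; this only widens the conjectural margin, so your conclusion stands. Third --- and this is the essential point, which you state correctly --- the statements you reduce to ($T(m) \geq 2$ for all $m \geq 10$, and $T(m) \gg m / \log^2 m$ eventually) are unconditional lower bounds for twin primes in dyadic intervals, strictly stronger than the twin prime conjecture and beyond all known methods. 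No completion of your argument is currently possible, which is exactly why the paper presents the statement as a conjecture rather than a proposition.
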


Both (a) and (b) imply the twin prime conjecture. \eqref{conject2} says that there is a pair of twin primes between $m$ and $2 m$. As $m \longrightarrow \infty$, the left of \eqref{conject1} diverges but the inequality \eqref{conject1} itself is speculation. 

\section{Circles, spheres, and calculation of $n X$}

We conclude by pointing out connections to the Chebyshev polynomials, the unit circle, and the unit sphere, and alternating groups $\mathcal{A}_4$ and $\mathcal{A}_5$. Firstly, it is useful to efficiently compute \normalsize $n X = \overbrace{X \oplus X \oplus \dots \oplus X}^{n \text{ times}} $. \Large  



\begin{lemma}\label{cheb}
Let $X = \left(
\begin{array}{cccc}
 x_1 & x_2 & x_3 & x_4 \\
\end{array}
\right) \in \mathcal{S} \left( \mathcal{R} \right)$ and define the Chebyshev polynomials $T_n(x)$ and $U_n(x)$ by 
\begin{align}
\label{firstkind} T_0 & = 1, & T_1 & = x, & T_n & = 2 x T_{n-1} - T_{n-2} , \\
\label{secondkind} U_{0} & = 1, & U_1 & = 2 x,    & U_n & = 2 x U_{n-1} - U_{n-2} .
\end{align}
 Then 
 \begin{equation}\label{nx}
 n X = \left(
\begin{array}{cccc}
 T_{n} \left( x_1 \right) & x_2 U_{n-1} \left( x_1 \right)  & x_3 U_{n-1} \left( x_1 \right) & x_4 U_{n-1} \left( x_1 \right) \\
\end{array}
\right) ,
 \end{equation}
 and 
 \normalsize
\begin{align*}
T_{2 n - 1} & =  2 T_{n} T_{n - 1} - x , & U_{2 n - 1} & = 2 \left( x U_n - U_{n-1} \right) \left( 2 x U_{n-1} - U_n \right) + 2 x , \\
T_{2 n} & =  2 T_{n}^2 - 1 , & U_{2 n} & = 2 U_{n-1} \left( x U_n-U_{n-1} \right) + 1, \\
T_{2 n + 1} & =  2 \left( 2 x T_n - T_{n-1} \right) T_n - x , & U_{2 n + 1} & = 2 U_n \left( x U_n - U_{n-1} \right) .
\end{align*}
\Large
 \end{lemma}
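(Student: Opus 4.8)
The plan is to reduce the whole lemma to two elementary inputs and an induction. The inputs are, first, the two facts about the vector part $\utilde{X} = (x_2,x_3,x_4)$ that $\utilde{X}\times\utilde{X}=0$ and $\utilde{X}\cdot\utilde{X}=1-x_1^2$, the latter being exactly the sphere relation \eqref{3sphere}; and second, the coupling identities
\begin{align*}
T_{n+1}(x) &= x\,T_n(x) - (1-x^2)\,U_{n-1}(x), & U_n(x) &= x\,U_{n-1}(x) + T_n(x),
\end{align*}
which are the algebraic form of the heuristic ``$x_1=\cos\theta$, so $nX$ rotates by $n\theta$''. These two are themselves instances of standard product-to-sum relations, or follow by a short induction from \eqref{firstkind}--\eqref{secondkind}. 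I would prove \eqref{nx} by induction on $n$, and then treat the six doubling formulas separately as pure polynomial identities.

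For \eqref{nx} the base case $n=1$ is immediate from $T_1=x$ and $U_0=1$. Assume $nX = (\,T_n(x_1)\mid U_{n-1}(x_1)\,\utilde{X}\,)$ and abbreviate $a=T_n(x_1)$, $b=U_{n-1}(x_1)$, so the vector part of $nX$ is $b\,\utilde{X}$. Applying the group law \eqref{grouplaw} to $(n+1)X = nX \oplus X$, the scalar coordinate is
\[
a x_1 - (b\,\utilde{X})\cdot\utilde{X} = a x_1 - b(1-x_1^2) = T_{n+1}(x_1)
\]
by the first coupling identity, while the vector part is
\[
(b\,\utilde{X})x_1 + a\,\utilde{X} - (b\,\utilde{X})\times\utilde{X} = (x_1 b + a)\,\utilde{X} = U_n(x_1)\,\utilde{X},
\]
where the cross term vanishes because $\utilde{X}\times\utilde{X}=0$ and the last equality is the second coupling identity. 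This closes the induction.

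For the doubling formulas I would use that $T_n,U_n\in\Z[x]$, so any relation among them holding for all $x\in[-1,1]$ holds identically in $\Z[x]$ and hence after evaluation in any commutative ring $\mathcal{R}$. Writing $x=\cos\theta$, $T_n=\cos n\theta$, $U_{n-1}=\sin n\theta/\sin\theta$, each formula collapses to a product-to-sum identity; all six descend from $2T_mT_n=T_{m+n}+T_{m-n}$ together with $x U_n - U_{n-1}=T_{n+1}$ and $2x U_{n-1}-U_n=U_{n-2}$. For example $T_{2n}=2T_n^2-1$ is the double-angle formula, and $U_{2n+1}=2U_nT_{n+1}=2U_n(xU_n-U_{n-1})$ comes from $\sin 2(n+1)\theta = 2\sin(n+1)\theta\cos(n+1)\theta$; the remaining four follow in the same way.

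The only genuinely delicate point is the passage between a general commutative ring and the trigonometric picture, since one cannot substitute $\cos\theta$ when $\mathcal{R}$ is arbitrary. This is dispatched once and for all by the polynomial-identity principle above for the doubling formulas, whereas \eqref{nx} needs no such passage at all, its induction using only the ring axioms and the sphere relation. A cleaner alternative for \eqref{nx} is to route through the map $\theta$ of Proposition \ref{firstprop}: since $\theta(X)\in\text{SU}_2$ has determinant $1$ and trace $2x_1$, the Cayley--Hamilton identity $\theta(X)^2 = 2x_1\,\theta(X)-I$ gives $\theta(X)^n = U_{n-1}(x_1)\,\theta(X) - U_{n-2}(x_1)\,I$, and reading off the four entries of this matrix reproduces \eqref{nx} after using $T_n = xU_{n-1}-U_{n-2}$.
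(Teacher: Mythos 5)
Your proposal is correct, and for the core identity \eqref{nx} it is essentially the paper's own argument: your two ``coupling identities'' $T_{n+1} = xT_n - (1-x^2)U_{n-1}$ and $U_n = xU_{n-1} + T_n$ are precisely the paper's matrix identity \eqref{Chebone} written out component-wise, and your induction computing $nX \oplus X$ via \eqref{grouplaw}, with the cross product $\utilde{X} \times \utilde{X}$ vanishing and $\utilde{X} \cdot \utilde{X} = 1 - x_1^2$ from the sphere relation, is exactly the computation the paper compresses into ``it is easy to show.'' Where you genuinely diverge is in the six doubling formulas. The paper derives the $T$-identities from the composition law $T_{mn} = T_m \circ T_n$, which it obtains from associativity of $\oplus$ (that is, $(mn)X = m(nX)$ read in the first coordinate), and then extracts the $U$-identities from these together with the recursions \eqref{firstkind}, \eqref{secondkind} and \eqref{Chebone}; this keeps the argument internal to the group structure. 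You instead verify all six as polynomial identities via $x = \cos\theta$, $T_n = \cos n\theta$, $U_{n-1} = \sin n\theta / \sin\theta$, and transfer them to an arbitrary commutative ring by the principle that integer polynomials agreeing on $[-1,1]$ agree identically in $\Z[x]$ and hence after any evaluation; this is a legitimate, more elementary route that decouples the polynomial identities from the group law entirely, and your explicit attention to that transfer is exactly the right care to take. Your closing Cayley--Hamilton alternative, $\theta(X)^n = U_{n-1}(x_1)\,\theta(X) - U_{n-2}(x_1)\,I$ from $\theta(X)^2 = 2x_1\,\theta(X) - I$, does not appear in the paper at all and is arguably the cleanest proof of \eqref{nx}, needing only $T_n = xU_{n-1} - U_{n-2}$. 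One small looseness: for the $U$-doubling formulas your reduction needs the mixed product-to-sum identity $2\cos A \sin B = \sin(A+B) - \sin(A-B)$, not just $2T_mT_n = T_{m+n} + T_{m-n}$; this is a routine addition rather than a gap.
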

 
\begin{proof}
This is a consequence of two simple induction arguments. One must first prove by induction that 
 \begin{equation}\label{Chebone}
 \left(
\begin{array}{c}
 T_n \\
 U_{n-1} \\
\end{array}
\right) = \left(
\begin{array}{cc}
 x & x^2 - 1 \\
 1 & x \\
\end{array}
\right) \left(
\begin{array}{c}
 T_{n-1} \\
 U_{n-2} \\
\end{array}
\right) .
 \end{equation}
Note $0X = O$ and $1 X = \left(
\begin{array}{cccc} 
T_1 \left( x_1 \right) & x_2 U_0 \left( x_1 \right) & x_3 U_0 \left( x_1 \right) &  x_4 U_0 \left( x_1 \right) 
\end{array}
\right)$. Assume that $$(n-1) X = \left(
\begin{array}{cccc}
 T_{n-1} \left( x_1 \right) & x_2 U_{n-2} \left( x_1 \right)  & x_3 U_{n-2} \left( x_1 \right) & x_4 U_{n-2} \left( x_1 \right) \\
\end{array}
\right) .$$ Then it is easy to show that
\large 
 \begin{eqnarray*}
X \oplus (n-1) X & = & \left(
\begin{array}{cccc} 
T_n \left( x_1 \right) & x_2 U_{n-1} \left( x_1 \right) & x_3 U_{n-1} \left( x_1 \right) & x_4 U_{n-1} \left( x_1 \right) 
\end{array}
\right) .
\end{eqnarray*}
\Large
Equation \eqref{nx} follows by the principle of mathematical induction. 

The identities for $T_{2 n - 1}$, $T_{2 n}$, and $T_{2 n + 1}$ can be obtained from the functional equation 
\begin{equation*}
T_{m n}(x) = T_m \left( T_n (x) \right) = T_n \left( T_m (x) \right) ,
\end{equation*}
which follows from associativity of $\oplus$. The identities for the Chebyshev polynomials of the second kind follow from those for $T_{2 n - 1}$, $T_{2 n}$, and $T_{2 n + 1}$ together with the recursive definitions \eqref{firstkind}, \eqref{secondkind} and identity \eqref{Chebone}.
 \end{proof}
 
The Chebyshev polynomials also serve as multiplication polynomials for the group of points of the unit circle, see Figure \ref{uncirc}, so Lemma \ref{cheb} is possibly unsurprising since: 

\begin{remark}\label{conics}
The set of all points $X = \left(
\begin{array}{cccc}
 x_1 & 0 & x_3 & 0 \\
\end{array}
\right) \in \mathcal{S} \left( \mathcal{R} \right)$ forms a non-normal subgroup of $\mathcal{S} \left( \mathcal{R} \right)$ isomorphic to the group of points of the unit circle $\mathcal{C} : x_1^2 + x_3^2 = 1$ with coordinates in a commutative ring with unity $\mathcal{R}$. Letting $\mathcal{S}_2$ be the unit sphere and $p$ be an odd prime, there is a bijection $\mathcal{S}(\mathbb{Z}/p) / \mathcal{C}(\mathbb{Z}/p) \longrightarrow \mathcal{S}_2(\mathbb{Z}/p)$.
\end{remark}

\begin{lemma}
Let $H = \left\{ O, T \right\} $, where $O = \left(
\begin{array}{cccc}
 1 & 0 & 0 & 0 \\
\end{array}
\right)$ and \\
$T = \left(
\begin{array}{cccc}
 - 1 & 0 & 0 & 0 \\
\end{array}
\right)$ and let $p$ be an odd prime. Then $H$ is a normal subgroup of $\mathcal{S} \left( \mathbb{Z}/p \right)$.
\end{lemma}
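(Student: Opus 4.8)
The plan is to verify first that $H$ is a subgroup and then to promote this to normality by showing that $T$ lies in the center of $\mathcal{S}\left(\mathbb{Z}/p\right)$. Because $H$ has order $2$ and always contains the identity $O$ as a fixed point of conjugation, normality reduces to a single statement about the one nontrivial element $T$.

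First I would establish that $H$ is a subgroup. The element $O$ is the identity of $\mathcal{S}\left(\mathbb{Z}/p\right)$, as recorded in the proof of Proposition \ref{firstprop} (it is the kernel of $\phi$). Substituting $T = \left(\begin{array}{cccc} -1 & 0 & 0 & 0 \end{array}\right)$, for which the first coordinate is $-1$ and $\utilde{T} = \left(\begin{array}{ccc} 0 & 0 & 0 \end{array}\right)$, into the group law \eqref{grouplaw} gives $T \oplus T = O$. Hence $T$ is its own $\oplus$-inverse, and $H = \{O, T\}$ is closed under both $\oplus$ and inversion. Since $p$ is odd we have $-1 \not\equiv 1 \pmod p$, so $O \neq T$ and $\lvert H \rvert = 2$.

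For normality the cleanest route runs through the isomorphism $\theta$ of Proposition \ref{firstprop}. Evaluating $\theta$ at $T$ yields $\theta(T) = \left(\begin{array}{cc} -1 & 0 \\ 0 & -1 \end{array}\right) = -I$, a central scalar matrix in $\text{SU}_2\left(\mathbb{Z}/p\right)$ since it commutes with every $2 \times 2$ matrix. As $\theta$ is a group isomorphism, $T = \theta^{-1}(-I)$ commutes with every element of $\mathcal{S}\left(\mathbb{Z}/p\right)$. Equivalently, a direct substitution into \eqref{grouplaw} shows $X \oplus T = T \oplus X = \left(\begin{array}{cccc} -x_1 & -x_2 & -x_3 & -x_4 \end{array}\right)$ for every $X$, confirming that $T$ is central.

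Finally I would conclude normality. Writing $\ominus X$ for the $\oplus$-inverse of $X$, the identity $O$ is fixed under conjugation for trivial reasons, while centrality of $T$ gives $X \oplus T \oplus (\ominus X) = T \oplus X \oplus (\ominus X) = T$ for every $X \in \mathcal{S}\left(\mathbb{Z}/p\right)$. Therefore $X \oplus H \oplus (\ominus X) = H$ for all $X$, so $H$ is normal. I do not expect any genuine obstacle beyond carrying out the centrality computation; the only point worth flagging is the contrast with the circle subgroup of Remark \ref{conics}, which is \emph{non}-normal, whereas here normality is forced precisely because the nontrivial element $T$ is central and so conjugation cannot move it.
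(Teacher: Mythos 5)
Your proof is correct and takes essentially the same route as the paper: the paper's one-line proof simply asserts $X \oplus O \ominus X = O \in H$ and $X \oplus T \ominus X = T \in H$, which is exactly the conjugation-invariance you establish. Your write-up is in fact more complete, since you also check that $H$ is a subgroup ($T \oplus T = O$) and supply the justification the paper leaves implicit, namely that $T$ is central because $\theta(T) = -I$ is a scalar matrix (equivalently $X \oplus T = T \oplus X = \left( \begin{array}{cccc} -x_1 & -x_2 & -x_3 & -x_4 \end{array} \right)$ by the group law).
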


\begin{proof}
We have $ X \oplus O \ominus X = O \in H$ and $ X \oplus T \ominus X = T \in H$ so $H$ is a normal subgroup of $\mathcal{S} \left( \mathbb{Z}/p \right)$.  
\end{proof}

This means that the points of the $3$-sphere with non-negative coordinates forms a group. When the coordinates are in the field $\Z/p$ for $p = 3$ or $4$, these groups are isomorphic to the alternating groups.  

\begin{example}
Let $p = 3$ and $H = \left\{ \left(
\begin{array}{cccc}
 1 & 0 & 0 & 0 \\
\end{array}
\right) , \left(
\begin{array}{cccc}
 2 & 0 & 0 & 0 \\
\end{array}
\right) \right\}$. Then $\# \mathcal{S} \left( \mathbb{Z}/p \right) = 24$, $\mathcal{S}(\mathbb{Z}/3)/H = \{ H, I, J, K, A, B, C, D, U, V, W, X \} $, 
\large 
\begin{align*}
H & = \left[ \left(
\begin{array}{cccc}
 1 & 0 & 0 & 0 \\
\end{array}
\right) \right] , & I & = \left[ \left(
\begin{array}{cccc}
 0 & 0 & 0 & 1 \\
\end{array}
\right) \right] , & J & = \left[ \left(
\begin{array}{cccc}
 0 & 0 & 1 & 0 \\
\end{array}
\right) \right] , \\
K & = \left[ \left(
\begin{array}{cccc}
 0 & 1 & 0 & 0 \\
\end{array}
\right) \right] , & A & = \left[ \left(
\begin{array}{cccc}
 1 & 1 & 1 & 1 \\
\end{array}
\right) \right] , & B & = \left[ \left(
\begin{array}{cccc}
 1 & 2 & 1 & 2 \\
\end{array}
\right) \right] , \\
 C & = \left[ \left(
\begin{array}{cccc}
 1 & 1 & 2 & 2 \\
\end{array}
\right) \right] , & D & = \left[ \left(
\begin{array}{cccc}
 1 & 2 & 2 & 1 \\
\end{array}
\right) \right] , & U & = \left[ \left(
\begin{array}{cccc}
 1 & 1 & 1 & 2 \\
\end{array}
\right) \right] , \\
V & = \left[ \left(
\begin{array}{cccc}
 1 & 1 & 2 & 1 \\
\end{array}
\right) \right] , & W & = \left[ \left(
\begin{array}{cccc}
 1 & 2 & 2 & 2 \\
\end{array}
\right) \right] , & X & = \left[ \left(
\begin{array}{cccc}
 1 & 2 & 1 & 1 \\
\end{array}
\right) \right] .
\end{align*}
\Large
It is easy to check that $\mathcal{S}(\mathbb{Z}/3)/H$ is isomorphic to the alternating group $\mathcal{A}_4$ and we have the following Cayley table for $\mathcal{S}(\mathbb{Z}/3)/H$: 
$$\begin{array}{c|cccc|cccc|cccc}
\oplus & H & I & J & K & A & B & C & D & U & V & W & X \\ 
\hline H & H & I & J & K & A & B & C & D & U & V & W & X \\
 I & I & H & K & J & B & A & D & C & V & U & X & W \\
 J & J & K & H & I & C & D & A & B & W & X & U & V \\
 K & K & J & I & H & D & C & B & A & X & W & V & U \\ \hline
 A & A & C & D & B & W & U & V & X & K & I & H & J \\
 B & B & D & C & A & X & V & U & W & J & H & I & K \\
 C & C & A & B & D & U & W & X & V & I & K & J & H \\
 D & D & B & A & C & V & X & W & U & H & J & K & I \\ \hline
 U & U & X & V & W & J & I & K & H & D & A & C & B \\
 V & V & W & U & X & K & H & J & I & C & B & D & A \\
 W & W & V & X & U & H & K & I & J & B & C & A & D \\
 X & X & U & W & V & I & J & H & K & A & D & B & C \\
 \end{array}$$
\end{example}

\begin{example}
Let $p = 5$ and $H = \left\{ \left(
\begin{array}{cccc}
 1 & 0 & 0 & 0 \\
\end{array}
\right) , \left(
\begin{array}{cccc}
 4 & 0 & 0 & 0 \\
\end{array}
\right) \right\}$. $\mathcal{S} \left( \mathbb{Z}/p \right)$ has $120$ points and $\mathcal{S} \left( \mathbb{Z}/p \right)/H$ is isomorphic to the alternating group $\mathcal{A}_5$. This pattern does not continue since $\frac{1}{2} R_4(7) \not= \frac{1}{2} n !$, $n \in \mathbb{Z}$. 
\end{example}

\section{Acknowledgments} 

I thank Franz Lemmermeyer for pointing out that $R_4(n)$ can be counted via the approach taken in Lemmas \ref{Weil}, \ref{Hensel}, and \ref{multiplicative} and for the advice to investigate the relationship between circles and spheres.

\end{document}